\newtheorem{teo}{Theorem}
\newtheorem{lem}{Lemma}
\newtheorem{corol}{Corollary}
 \title{{\bf   Kipriyanov's Fractional Calculus  Prehistory  and Legacy
 }}
\author{Maksim \,V.~Kukushkin   \\ \\
 \small  \textit{Moscow State University of Civil Engineering, 129337,  Moscow, Russia}\\
 \small\textit{Russian Academy of Science, Institute of Applied Mathematics and Automation,   360051,  Nalchik, Russia}\\
\textit{\small\textit{kukushkinmv@rambler.ru}} }
\date{}
\begin{document}

\maketitle

\begin{abstract}
This paper is partly a historical  survey of various  approaches and methods in the fractional calculus, partly a description of the Kipriyanov  extraordinary  theory in comparison with the classical one. The significance and outstanding methods in constructing the  independent  Kipriyanov  fractional calculus theory are convexly stressed, also we represent modern results involving the Kipriyanov operator and corresponding generalization under the semigroup theory point of view.

\end{abstract}
\begin{small}\textbf{Keywords:} Fractional power of the m-accretive  operator; infinitesimal  generator of a semigroup;    Kipriyanov fractional differential operator;
 strictly accretive operator; abstract evolution equations.  \\\\
{\textbf{MSC} 47B28; 47A10; 47B12; 47B10; 47B25; 20M05; 20M99; 26A33.}
\end{small}

\section{Brief historical review}

\noindent{\bf 1. Birth  of Kipriyanov's fractional calculus }\\

Ivan Alexandrovich Kipriyanov was born on
the Urals in the Chelyabinsk region. In 1945,   Kipriyanov I.A. entered the Faculty of Physics and Mathematics of the Chelyabinsk Pedagogical Institute.
In 1949 he was accepted to graduate school
 Steklov  V.A. Mathematical Institute of the USSR Academy of Sciences, his supervisor was Keldysh M.V.
In 1954   Kupriyanov I.A.  defended  his Ph. D thesis
"On summation of Fourier series
and interpolation processes for
functions of two variables". In this
work,  Kupriyanov I.A. found
a class of functions of two variables,
to which it is possible to completely transfer the results of Lozinsky S.M.  that, in a sense, every theorem on convergence and summation
of a one-dimensional Fourier series can be transferred to convergence
and summation of the corresponding trigonometric interpolation
process with equally spaced nodes. Moreover, these results
remain valid for more general approximating processes.
In his review of the dissertation, Prof.  Lozinsky S.M. noted the successful
choice of a class of functions of two variables. This choice, as
 Lozinsky S.M. wrote, required analytical insight and provided
the success of the work. In his review of the abstract of the dissertation, Prof.  Kantorovich L.V.
noted that, having familiarized himself with the work of  Kipriyanov I.A. on
the abstract and two reports in Leningrad in November 1953 at the seminar
on the theory of functions and functional analysis of Leningrad University,
he formed a very positive opinion about it. The work made
a very favorable impression on the other participants of the seminar,
as its discussion showed. It should be noted that the seminar was attended
by Professors  Kantorovich L.V.,  Lozinsky S.M.,  Natanson I.P., etc.
In the late 50s and early 60s of our century, active
work was carried out on the study of various kinds of functional spaces, important in themselves and also playing an important role in the modern
theory of partial differential equations and probability theory.
In 1958,  Kipriyanov's first work on
fractional order derivatives appeared, in which the concept
of fractional order partial derivatives for functions given in a cube was introduced, starting from the fractional
integral in the sense of Marchaud  and derivatives of the  integer order in the sense of  Sobolev S.L., here we should note that the implemented approach was unique and distinctive in comparison with the regular fractional calculus theory of the time.    The definition of the Kipriyanov fractional derivative  is based upon some
integral identity that relates to the Marchaud fractional derivative. The corresponding integral representation is given, the definition of two functional spaces is given, and
embedding theorems and space completeness theorems are proved for them.
One of these theorems in the one-dimensional case significantly complements
the well-known theorem of Hardy and Littlewood on fractional integrals. In the next
cycle of works from 1959 to 1961,  Kupriyanov I.A. studied
the fractional derivatives in the direction introduced by him  \cite{firstab_lit:kipriyanov1960},\cite{firstab_lit:1kipriyanov1960},\cite{firstab_lit:2kipriyanov1960}, \cite{firstab_lit:2.2kipriyanov1962}.
In 1960   Kupriyanov I.A.  published a paper  on the operator
of  fractional differentiation   \cite{firstab_lit:1kipriyanov1960}, which is a fractional order operator with respect to a second-order elliptic operator with smooth coefficients. It is remarkable that this operator    allows us to study  boundary value problems for differential
equations containing, in addition to partial derivatives,  fractional derivatives. Here, we should interrupt  the description of Kipriyanov's other  achievements and focus on the specific questions regarding his most significant scientific contribution - fractional calculus as an  independently constructed axiomatic theory.\\

\noindent{\bf 2.  Branches in generalizations of the  Riemann-Liouville an Marchaud   operators }\\

The central point of fractional calculus  is a concept of fractional differentiation. In this regard, we should admit that  the  Riemann-Liouville operator of fractional differentiation is  at the origin of the  concept  and  plays a special role in the science. Such operators as Caputo and Marchaud certainly are worth mentioned in the context, the first one is not interesting for us since it is more like a reduction of the Riemann-Liouville operator on smooth functions disappearing at the initial point (if we consider the matter from the point of view that is of functional analysis), but the second one does completely reflect a true mathematical nature of  fractional derivative as a notion, since it has a representation in terms of infinitesimal generator of the corresponding semigroup \cite{kukushkin2021a}. It is clear that considering such an approach we are forced to deal with more general notions of the operator theory and in this way the understanding of the notion of fractional derivative as a fractional power of infinitesimal generator is harmoniously completed, on the one hand.

On the other hand, for a harmony of the narrative,  we should referee an extract of the paper  \cite{Sonin Abel} appealing to  another generalization,  if we interpret the fractional differential  Riemann-Liouville operator as a particular case of the derivative of the convolution operator for which the so called Sonin condition holds \cite{Sonin}.
  We should note  that the second direction in understanding the matter   was    developed  by mathematicians  such as
    Rubin B.S. \cite{firstab_lit: Rubin},\cite{firstab_lit: Rubin 1},\cite{firstab_lit: Rubin 2}, Vakulov B.G. \cite{firstab_lit: Vaculov},   Samko S.G.
     \cite{firstab_lit: Samko M. Murdaev},\cite{firstab_lit: Samko Vakulov B. G.}, Karapetyants N.K.
\cite{firstab_lit: Karapetyants N. K. Rubin B. S. 1},\cite{firstab_lit: Karapetyants N. K. Rubin B. S. 2}.
      Let us remind that the so called mapping theorem  for the Riemann-Liouville operator (the particular case of the Sonin operator)  were firstly studied   by H. Hardy and Littlewood  \cite{firstab_lit H-L1}  and nowadays is known as the Hardy-Littlewood theorem with limit index. However there was an attempt to extend this theorem on some class of  weighted Lebesgue spaces defined as functional spaces endowed with the following norm
  $
  \|f\|_{L_{p}(I\!,\,\beta,\gamma)}:=\|f\|_{L_{p}(I\!,\mu)} ,\,\mu(x)=\omega^{\,\beta,\gamma}(x):=(x-a)^{\beta}(b-x)^{\gamma},\,\beta,\gamma\in \mathbb{R},\,I:=(a,b).
  $
  In this   direction  the mathematicians such as  Rubin B.S., Karapetyants N.K. \cite{firstab_lit: Karapetyants N. K. Rubin B. S. 1}
    had  success, the following problem was considered
    $
    I^{\alpha}_{a+}:L_{p}(I\!,\,\beta,\gamma)\longrightarrow\,  ?
    $
    However the converse  theorem was not! All these create the prerequisite to invent another approach for studying mapping properties of the Riemann-Liouville operator  or, more generally,  integral operators. Thus, trying to solve (at least in particular)  more general problem, in the paper  \cite{kukushkin2020Mz}   we deal  with mapping theorems for  operators acting on Banach spaces in order to obtain afterwards the desired results applicable to integral operators. In this regard the following papers are worth noticing \cite{kukushkin2019axi}, \cite{kukushkin2020axi}, where in additional,  a special technique based on the properties of the Jacobi polynomials was introduced.  Based on this approach,
in the paper \cite{Sonin Abel} we  offer a method of studying the Sonin operator \cite{Sonin}, which is defined as a convolution operator $ _{s}I^{\varrho}_{a+}\varphi := \varrho \ast_{a}\!  \varphi$  under  some conditions (the so called Sonin conditions) imposed on the   kernel  $ \varrho,$ i.e.   there exists a function $\vartheta$ such  that $  \varrho  \ast \vartheta=1.$     The particular case of the Sonin kernel is a kernel of the fractional integral  Riemman-Liouville operator, many other examples can be found in papers \cite{Samko 2003}, \cite{Rubin1982}, the first one gives us a survey considering various types of kernels such as the Bessel-type function, the power-exponential function, the incomplete gamma function e.t.c., the main concept of the second one is to construct a widest class of functions being a Sonin kernel.   Here, we can partly close the matter at this point  having noted that there was a successful attempt to establish a criterion  of the solvability of the Sonin-Abel Equation in the Weighted
Lebesgue Space \cite{Sonin Abel}.

However, let us be back to the first understanding of the matter which is closely  connected with the notion of the Kipriyanov operator.\\

\noindent{\bf 3.  The semigroup approach and the spectral theory }\\

 The  idea  discussed in this paragraph relates      to   a   model that gives us a representation  of a  composition of  fractional differential operators   in terms of the semigroup theory.    For instance we can    represent a second order differential operator as a some kind of  a  transform of   the infinitesimal generator of a shift semigroup. Continuing this line of reasonings we generalized  a   differential operator with a fractional integro-differential composition  in final terms   to some transform of the corresponding  infinitesimal generator   and introduced  a class of  transforms of   m-accretive operators. Further,   we  used   methods obtained in the papers
\cite{firstab_lit(arXiv non-self)kukushkin2018},\cite{kukushkin2019} to  study spectral properties  of non-selfadjoint operators acting  in  a complex  separable Hilbert space, these methods alow us to   obtain an  asymptotic equivalence between   the
real component of the resolvent and the resolvent of the   real component of an operator. Due  to such an approach we  obtain relevant  results since  an asymptotic formula  for   the operator  real component  can be  established in many cases
(see \cite{firstab_lit:2Agranovich2011}, \cite{firstab_lit:Rosenblum}). Thus,   a classification  in accordance with  resolvent  belonging     to  the  Schatten-von Neumann  class was  obtained,   a sufficient condition of completeness of the root vectors system was    formulated.

 The latter approach allows    to construct an abstract  model of a   differential operator with a fractional Kipriyanov integro-differential operator  composition  in  final terms, where  modeling is understood as an  interpretation of   concrete differential  operators in terms of the infinitesimal generator of a corresponding semigroup.  Moreover, we can  consider an  approach in contracting the space of fractionally-differentiable functions which originates  from the analog  created  by  Kipriyanov and goes further up to the semigroup theory generalizations.
   In this paper we  deal with a more general operator --- a  differential operator    with a fractional  integro-differential operator   composition  in   final terms, which covers the corresponding one-dimensional operator. Various types of fractional integro-differential operator compositions were studied by such mathematicians as
 Prabhakar T.R. \cite{firstab_lit:1Prabhakar}, Love E.R. \cite{firstab_lit:5Love}, Erdelyi A. \cite{firstab_lit:15Erdelyi}, McBride A. \cite{firstab_lit:9McBride},
  Dimovski I.H., Kiryakova V.S. \cite{firstab_lit:2Dim-Kir}, Nakhushev A.M. \cite{firstab_lit:nakh2003}.
In particular the aim of this paper is to represent a description of the previously obtained results under a specific point of view related with  Kipriyanov's fractional calculus.      \\

\noindent{\bf 3.  Evolution equations with the operator function in the second term }\\

   Having created a direction of the spectral theory of non-selfadjoint operators, we can consider abstract theoretical results as a base for further research studying such mathematical objects as a Cauchy problem for evolution equation of fractional order in the abstract Hilbert space. We consider in the second term an operator function defined on a special operator class covering a generator transform considered in \cite{kukushkin2021a} and discussed in the previous paragraph, where a corresponding semigroup is supposed to be a $C_{0}$ semigroup of contractions. In its own turn the transform reduces to a linear composition of differential operators of real order in various senses such as  the Riemann-Liouville  fractional differential operator, the Kipriyanov operator,  the Riesz potential, the difference operator   \cite{firstab_lit:1kipriyanov1960},\cite{kukushkin2021a},\cite{firstab_lit:samko1987}.  Moreover, in the paper  \cite{firstab_lit:2kukushkin2022} we broadened the class  of differential operators   having considered  the artificially constructed normal operator that     cannot be covered by the Lidskii results \cite{firstab_lit:1Lidskii}. It should be noted that the Kipriyanov operator is very useful in theoretical constructions as well as in applications since it covers Euclidean spaces and can be considered as a term in a perturbation of a differential operator of an arbitrary odd order acting in n-dimensional Euclidean space. This fact is based upon the the brilliant idea of  Kipriyanov to consider directional coordinates in the n-dimensional Euclidean space, the latter approach is independent on dimension what is an enormous advantage for we can consider compositions of operators having various nature.

    The application part of the theory  involving  fractional integro-differential constructions appeals to the results and problems which can be considered as particular cases of the abstract ones,  the following papers a worth noting within the context
   \cite{firstab_lit:Mamchuev2017a},\cite{firstab_lit:Mamchuev2017},\cite{firstab_lit:Pskhu}. At the same time, we should admit that abstract methods can be "clumsy" for  some peculiarities can be considered only  by a unique technique what forms a main contribution of the specialists dealing with concrete differential equations. Here, we should add that  the  relevance of the abstract  problems  can be expressed convexly by virtue of  the application of the fractional integro-differential compositions  with the   Kipriyanov operator  in  physics and engineering sciences.

Apparently,  in the paper \cite{firstab_lit(frac2023)}  we realized  the idea to broaden the class of fractional integro-differential compositions  having  considered  a notion of    operator function   applicably to a Cauchy problem  for an abstract fractional  evolution equation with an operator function in the second term not containing the time variable, where the derivative in the first term is supposed to be of fractional order. Here, we should note that regarding  to functional spaces we have  that an operator function generates  a variety of operators acting in a corresponding space. In this regard, even a power function gives us an interesting result  \cite{firstab_lit:2kukushkin2022}.
 In the context of the existence and uniqueness theorems, a significant  refinement  that is worth   highlighting  is the obtained  formula for the solution represented by a series on the root vectors. In the absence of the norm convergence of the root vector  series, we need to consider a notion of convergence in   weaker Bari, Riesz, Abel-Lidskii  senses  \cite{firstab_lit:1Lidskii},\cite{firstab_lit:2Agranovich1994},\cite{firstab_lit:1Gohberg1965}.

In spite of the claimed rather applied objectives from the operator theory point of view, we admit that  the  problem of the root vectors expansion for a non-selfadjoint unbounded  operator still remains relevant in the context of the paper.
  It is remarkable that the problem origins nearly  from  the first  half of the last century \cite{firstab_lit:1Katsnelson}, \cite{firstab_lit:2Markus}, \cite{firstab_lit:1Lidskii},      \cite{firstab_lit:1Krein},\cite{firstab_lit:Markus Matsaev},\cite{firstab_lit:2Agranovich1994},\cite{firstab_lit:Shkalikov A.}, \cite{firstab_lit:Motovilov},\cite{firstab_lit(arXiv non-self)kukushkin2018}, \cite{kukushkin2019} \cite{kukushkin2021a}, \cite{firstab_lit:1kukushkin2021}. However, we have a particular interest  when an operator is represented by a linear combination of operators where   a  so-called senior term  is non-selfadjoint  for a case corresponding to a selfadjoint operator was thoroughly studied in the  papers \cite{firstab_lit:1Katsnelson},\cite{firstab_lit:1Krein},\cite{firstab_lit:Markus Matsaev},\cite{firstab_lit:2Markus},\cite{firstab_lit:Motovilov},
\cite{firstab_lit:Shkalikov A.}. In this regard the linear combination of the second order  differential operator and the Kipriyanov operator represents a relevant model  class for which the obtained  spectral theory results \cite{kukushkin2019} created a prerequisite for       further abstract generalizations
 \cite{firstab_lit(arXiv non-self)kukushkin2018},\cite{kukushkin2021a}.

\section{Abstract method}

\vspace{0.5cm}
\noindent{\bf 1. Preliminaries}\\

Let    $ C,C_{i} ,\;i\in \mathbb{N}_{0}$ be     positive constants. We   assume   that  a  value of $C$    can be different in   various formulas and parts of formulas  but   values of $C_{i} $ are  certain. Denote by $ \mathrm{Fr}\,M$   the set of boundary points of the set $M.$    Everywhere further, if the contrary is not stated, we consider   linear    densely defined operators acting on a separable complex  Hilbert space $\mathfrak{H}$. Denote by $ \mathcal{B} (\mathfrak{H})$    the set of linear bounded operators   on    $\mathfrak{H}.$  Denote by
    $\tilde{L}$   the  closure of an  operator $L.$ We establish the following agreement on using  symbols $\tilde{L}^{i}:= (\tilde{L})^{i},$ where $i$ is an arbitrary symbol.  Denote by    $    \mathrm{D}   (L),\,   \mathrm{R}   (L),\,\mathrm{N}(L)$      the  {\it domain of definition}, the {\it range},  and the {\it kernel} or {\it null space}  of an  operator $L,$ respectively. The deficiency (codimension) of $\mathrm{R}(L),$ dimension of $\mathrm{N}(L)$ are denoted by $\mathrm{def}\, L,\;\mathrm{nul}\,L$ respectively. Assume that $L$ is a closed   operator acting on $\mathfrak{H},\,\mathrm{N}(L)=0,$  let us define a Hilbert space
$
 \mathfrak{H}_{L}:= \big \{f,g\in \mathrm{D}(L),\,(f,g)_{ \mathfrak{H}_{L}}=(Lf,Lg)_{\mathfrak{H} } \big\}.
$
Consider a pair of complex Hilbert spaces $\mathfrak{H},\mathfrak{H}_{+},$ the notation
$
\mathfrak{H}_{+}\subset\subset\mathfrak{ H}
$
   means that $\mathfrak{H}_{+}$ is dense in $\mathfrak{H}$ as a set of    elements and we have a bounded embedding provided by the inequality
$
\|f\|_{\mathfrak{H}}\leq C_{0}\|f\|_{\mathfrak{H}_{+}},\,C_{0}>0,\;f\in \mathfrak{H}_{+},
$
moreover   any  bounded  set with respect to the norm $\mathfrak{H}_{+}$ is compact with respect to the norm $\mathfrak{H}.$
  Let $L$ be a closed operator, for any closable operator $S$ such that
$\tilde{S} = L,$ its domain $\mathrm{D} (S)$ will be called a core of $L.$ Denote by $\mathrm{D}_{0}(L)$ a core of a closeable operator $L.$ Let    $\mathrm{P}(L)$ be  the resolvent set of an operator $L$ and
     $ R_{L}(\zeta),\,\zeta\in \mathrm{P}(L),\,[R_{L} :=R_{L}(0)]$ denotes      the resolvent of an  operator $L.$ Denote by $\lambda_{i}(L),\,i\in \mathbb{N} $ the eigenvalues of an operator $L.$
 Suppose $L$ is  a compact operator and  $N:=(L^{\ast}L)^{1/2},\,r(N):={\rm dim}\,  \mathrm{R}  (N);$ then   the eigenvalues of the operator $N$ are called   the {\it singular  numbers} ({\it s-numbers}) of the operator $L$ and are denoted by $s_{i}(L),\,i=1,\,2,...\,,r(N).$ If $r(N)<\infty,$ then we put by definition     $s_{i}=0,\,i=r(N)+1,2,...\,.$
 Let  $\nu(L)$ denotes   the sum of all algebraic multiplicities of an  operator $L.$ Denote by $n(r)$ a function equals to the quantity of the elements of the sequence $\{a_{n}\}_{1}^{\infty},\,|a_{n}|\uparrow\infty$ within the circle $|z|<r.$ Let $A$ be a compact operator, denote by $n_{A}(r),$   {\it counting function}   a function $n(r)$ corresponding to the sequence  $\{s^{-1}_{i}(A)\}_{1}^{\infty}.$
  Let  $\mathfrak{S}_{p}(\mathfrak{H}),\, 0< p<\infty $ be       a Schatten-von Neumann    class and      $\mathfrak{S}_{\infty}(\mathfrak{H})$ be the set of compact operators.
   Denote by $\tilde{\mathfrak{S}}_{\rho}(\mathfrak{H})$ the class of the operators such that
$
 A\in \tilde{\mathfrak{S}}_{\rho}(\mathfrak{H}) \Rightarrow\{A\in\mathfrak{S}_{\rho+\varepsilon},\,A \overline{\in} \,\mathfrak{S}_{\rho-\varepsilon},\,\forall\varepsilon>0 \}.
$
In accordance with \cite{firstab_lit:1kukushkin2021}, we will call it   {\it Schatten-von Neumann    class of the convergence exponent}.
Suppose  $L$ is  an   operator with a compact resolvent and
$s_{n}(R_{L})\leq   C \,n^{-\mu},\,n\in \mathbb{N},\,0\leq\mu< \infty;$ then
 we
 denote by  $\mu(L) $   order of the     operator $L$  (see \cite{firstab_lit:Shkalikov A.}).
 Denote by  $ \mathfrak{Re} L  := \left(L+L^{*}\right)/2,\, \mathfrak{Im} L  := \left(L-L^{*}\right)/2 i$
  the  real  and   imaginary Hermitian  components    of an  operator $L$  respectively.
In accordance with  the terminology of the monograph  \cite{firstab_lit:kato1980}, the set $\Theta(L):=\{z\in \mathbb{C}: z=(Lf,f)_{\mathfrak{H}},\,f\in  \mathrm{D} (L),\,\|f\|_{\mathfrak{H}}=1\}$ is called the  {\it numerical range}  of an   operator $L.$
  An  operator $L$ is called    {\it sectorial}    if its  numerical range   belongs to a  closed
sector     $\mathfrak{ L}_{\iota}(\theta):=\{\zeta:\,|\arg(\zeta-\iota)|\leq\theta<\pi/2\} ,$ where      $\iota$ is the vertex   and  $ \theta$ is the semi-angle of the sector   $\mathfrak{ L}_{\iota}(\theta).$ If we want to stress the  correspondence  between $\iota$ and $\theta,$  then   we will write $\theta_{\iota}.$
 An operator $L$ is called  {\it bounded from below}   if the following relation  holds  $\mathrm{Re}(Lf,f)_{\mathfrak{H}}\geq \gamma_{L}\|f\|^{2}_{\mathfrak{H}},\,f\in  \mathrm{D} (L),\,\gamma_{L}\in \mathbb{R},$  where $\gamma_{L}$ is called a lower bound of $L.$ An operator $L$ is called  {\it   accretive}   if  $\gamma_{L}=0.$
 An operator $L$ is called  {\it strictly  accretive}   if  $\gamma_{L}>0.$      An  operator $L$ is called    {\it m-accretive}     if the next relation  holds $(A+\zeta)^{-1}\in \mathcal{B}(\mathfrak{H}),\,\|(A+\zeta)^{-1}\| \leq   (\mathrm{Re}\zeta)^{-1},\,\mathrm{Re}\zeta>0. $
    An operator $L$ is called     {\it symmetric}     if one is densely defined and the following  equality  holds $(Lf,g)_{\mathfrak{H}}=(f,Lg)_{\mathfrak{H}},\,f,g\in   \mathrm{D}  (L).$  Consider a   sesquilinear form   $ s [\cdot,\cdot]$ (see \cite{firstab_lit:kato1980} )
defined on a linear manifold  of the Hilbert space $\mathfrak{H}.$
Let   $  \mathfrak{h}=( s + s ^{\ast})/2,\, \mathfrak{k}   =( s - s ^{\ast})/2i$
   be a   real  and    imaginary component     of the   form $  s $ respectively, where $ s^{\ast}[u,v]=s \overline{[v,u]},\;\mathrm{D}(s ^{\ast})=\mathrm{D}(s).$ Denote by $   s  [\cdot ]$ the  quadratic form corresponding to the sesquilinear form $ s  [\cdot,\cdot].$ According to these definitions, we have $
 \mathfrak{h}[\cdot]=\mathrm{Re}\,s[\cdot],\,  \mathfrak{k}[\cdot]=\mathrm{Im}\,s[\cdot].$ Denote by $\tilde{s}$ the  closure   of a   form $s.$  The range of a quadratic form
  $ s [f],\,f\in \mathrm{D}(s),\,\|f\|_{\mathfrak{H}}=1$ is called   the {\it range} of the sesquilinear form  $s $ and is denoted by $\Theta(s).$
 A  form $s$ is called    {\it sectorial}    if  its    range  belongs to   a sector  having  a vertex $\iota$  situated at the real axis and a semi-angle $0\leq\theta<\pi/2.$   Due to Theorem 2.7 \cite[p.323]{firstab_lit:kato1980}  there exist unique    m-sectorial operators  $T_{s},T_{ \mathfrak{h}} $  associated  with   the  closed sectorial   forms $s,  \mathfrak{h}$   respectively.   The operator  $T_{  \mathfrak{h}} $ is called  a {\it real part} of the operator $T_{s}$ and is denoted by  $\mathrm{Re}\, T_{s}.$

Assume that  $T_{t},\,(0\leq t<\infty)$ is a semigroup of bounded linear operators on    $\mathfrak{H},$    by definition put
$$
Af=-\lim\limits_{t\rightarrow+0} \left(\frac{T_{t}-I}{t}\right)f,
$$
where $\mathrm{D}(A)$  is a set of elements for which  the last limit  exists in the sense of the norm  $\mathfrak{H}.$    In accordance with definition \cite[p.1]{Pasy} the operator $-A$ is called the  {\it infinitesimal  generator} of the semigroup $T_{t}.$

  Let $f_{t} :I\rightarrow \mathfrak{H},\,t\in I:=[a,b],\,-\infty< a <b<\infty.$ The following integral is understood in the Riemann  sense as a limit of partial sums
\begin{equation}\label{11.001}
\sum\limits_{i=0}^{n}f_{\xi_{i}}\Delta t_{i}  \stackrel{\mathfrak{H}}{ \longrightarrow}  \int\limits_{I}f_{t}dt,\,\lambda\rightarrow 0,
\end{equation}
where $(a=t_{0}<t_{1}<...<t_{n}=b)$ is an arbitrary splitting of the segment $I,\;\lambda:=\max\limits_{i}(t_{i+1}-t_{i}),\;\xi_{i}$ is an arbitrary point belonging to $[t_{i},t_{i+1}].$
The sufficient condition of the last integral existence is a continuous property (see\cite[p.248]{firstab_lit:Krasnoselskii M.A.}) i.e.
$
f_{t}\stackrel{\mathfrak{H}}{ \longrightarrow}f_{t_{0}},\,t\rightarrow t_{0},\;\forall t_{0}\in I.
$
The improper integral is understood as a limit
\begin{equation}\label{11.031}
 \int\limits_{a}^{b}f_{t}dt\stackrel{\mathfrak{H}}{ \longrightarrow} \int\limits_{a}^{c}f_{t}dt,\,b\rightarrow c,\,c\in [-\infty,\infty].
\end{equation}

Using     notations of the paper     \cite{firstab_lit:kipriyanov1960}, we assume that $\Omega$ is a  convex domain of the  $n$ -  dimensional Euclidean space $\mathbb{E}^{n}$, $P$ is a fixed point of the boundary $\partial\Omega,$
$Q(r,\mathbf{e})$ is an arbitrary point of $\Omega.$  Let  $\mathrm{d}:=\mathrm{diam}\Omega,$ we denote by $\mathbf{e}$   a unit vector having a direction from  $P$ to $Q,$ denote by $r=|P-Q|$   the Euclidean distance between the points $P,Q,$ and   use the shorthand notation    $T:=P+\mathbf{e}t,\,t\in \mathbb{R}.$
We   consider the Lebesgue  classes   $L_{p}(\Omega),\;1\leq p<\infty $ of  complex valued functions.  For the function $f\in L_{p}(\Omega),$    we have
\begin{equation}\label{1a}
\int\limits_{\Omega}|f(Q)|^{p}dQ=\int\limits_{\omega}d\chi\int\limits_{0}^{d(\mathbf{e})}|f(Q)|^{p}r^{n-1}dr<\infty,
\end{equation}
where $d\chi$   is an element of   solid angle of
the unit sphere  surface (the unit sphere belongs to $\mathbb{E}^{n}$)  and $\omega$ is a  surface of this sphere,   $d:=d(\mathbf{e})$  is the  length of the  segment of the  ray going from the point $P$ in the direction
$\mathbf{e}$ within the domain $\Omega.$
Without loss of  generality, we consider only those directions of $\mathbf{e}$ for which the inner integral on the right-hand  side of equality \eqref{1a} exists and is finite. It is  the well-known fact that  these are almost all directions.
 We use a shorthand  notation  $P\cdot Q=P^{i}Q_{i}=\sum^{n}_{i=1}P_{i}Q_{i}$ for the inner product of the points $P=(P_{1},P_{2},...,P_{n}),\,Q=(Q_{1},Q_{2},...,Q_{n})$ which     belong to  $\mathbb{E}^{n}.$
     Denote by  $D_{i}f$  a weak partial derivative of the function $f$ with respect to a coordinate variable with index   $1\leq i\leq n.$
We  assume that all functions have  a zero extension outside  of $\bar{\Omega}.$
Everywhere further,   unless  otherwise  stated,  we   use  notations of the papers   \cite{firstab_lit:1Gohberg1965},  \cite{firstab_lit:kato1980},  \cite{firstab_lit:kipriyanov1960}, \cite{firstab_lit:1kipriyanov1960},
\cite{firstab_lit:samko1987}.\\

Bellow, we represent  the  conditions of Theorem 1 \cite{kukushkin2021a} that  gives us a description of spectral properties, in terms of the real part order, of a non-selfadjoint operator $L$ acting in $\mathfrak{H}.$   \\

 \noindent  $ (\mathrm{H}1) $ There  exists a Hilbert space $\mathfrak{H}_{+}\subset\subset\mathfrak{ H}$ and a linear manifold $\mathfrak{M}$ that is  dense in  $\mathfrak{H}_{+}.$ The operator $L$ is defined on $\mathfrak{M}.$    \\

 \noindent  $( \mathrm{H2} )  \,\left|(Lf,g)_{\mathfrak{H}}\right|\! \leq \! C_{1}\|f\|_{\mathfrak{H}_{+}}\|g\|_{\mathfrak{H}_{+}},\,
      \, \mathrm{Re}(Lf,f)_{\mathfrak{H}}\!\geq\! C_{2}\|f\|^{2}_{\mathfrak{H}_{+}} ,\,f,g\in  \mathfrak{M},\; C_{1},C_{2}>0.
$
\\

Here, we should remark that since there is no general statement claiming  that the intersection of the domain of definitions of an  operator and its adjoint is a dense set, then we cannot restrict the reasonings considering Hermitian real  component but  compelled to involve the notion of the  operator real part. This is why it is rather reasonable to suggest     the the issue should  be   undergone    to a comprehensive analysis.

Consider  a condition  $\mathfrak{M}\subset \mathrm{D}( W ^{\ast}),$ in this case the real Hermitian component  $\mathcal{H}:=\mathfrak{Re }\,W$ of the operator is defined on $\mathfrak{M},$ the fact is that $\tilde{\mathcal{H}}$ is selfadjoint,    bounded  from bellow (see Lemma  3 \cite{firstab_lit(arXiv non-self)kukushkin2018}), where $H=Re W.$  Hence a corresponding sesquilinear  form (denote this form by $h$) is symmetric and  bounded from bellow also (see Theorem 2.6 \cite[p.323]{firstab_lit:kato1980}). It can be easily shown  that $h\subset   \mathfrak{h},$  but using this fact    we cannot claim in general that $\tilde{\mathcal{H}}\subset H$ (see \cite[p.330]{firstab_lit:kato1980} ). We just have an inclusion   $\tilde{\mathcal{H}}^{1/2}\subset H^{1/2}$     (see \cite[p.332]{firstab_lit:kato1980}). Note that the fact $\tilde{\mathcal{H}}\subset H$ follows from a condition $ \mathrm{D}_{0}(\mathfrak{h})\subset \mathrm{D}(h) $ (see Corollary 2.4 \cite[p.323]{firstab_lit:kato1980}).
 However, it is proved (see proof of Theorem  4 \cite{firstab_lit(arXiv non-self)kukushkin2018}) that relation H2 guaranties that $\tilde{\mathcal{H}}=H.$ Note that the last relation is very useful in applications, since in most concrete cases we can find a concrete form of the operator $\mathcal{H}.$

\vspace{0.5cm}

\noindent{\bf 2. Intrinsic properties of the Kipriyanov operator}\\

Here, we study a case $\alpha\in (0,1).$ Assume that  $\Omega\subset \mathbb{E}^{n}$ is  a convex domain, with a sufficient smooth boundary ($ C ^{3}$ class)   of the n-dimensional Euclidian space. For the sake of the simplicity we consider that $\Omega$ is bounded, but  the results  can be extended     to some type of    unbounded domains.
In accordance with the definition given in  the paper  \cite{firstab_lit:1kukushkin2018}, we consider the directional  fractional integrals.  By definition, put
$$
 (\mathfrak{I}^{\alpha}_{0+}f)(Q  ):=\frac{1}{\Gamma(\alpha)} \int\limits^{r}_{0}\frac{f (P+t \mathbf{e} )}{( r-t)^{1-\alpha}}\left(\frac{t}{r}\right)^{n-1}\!\!\!\!dt,\,(\mathfrak{I}^{\alpha}_{d-}f)(Q  ):=\frac{1}{\Gamma(\alpha)} \int\limits_{r}^{d }\frac{f (P+t\mathbf{e})}{(t-r)^{1-\alpha}}\,dt,
$$
$$
\;f\in L_{p}(\Omega),\;1\leq p\leq\infty.
$$
The properties of these operators  are  described  in detail in the papers  \cite{firstab_lit:1kukushkin2018},\cite{Sam2017}. Similarly to the monograph \cite{firstab_lit:samko1987} we consider   left-side  and  right-side cases. For instance, $\mathfrak{I}^{\alpha}_{0+}$ is called  a left-side directional  fractional integral. We suppose  $\mathfrak{I}^{0}_{0+} =I.$ Nevertheless,   this    fact can be easily proved dy virtue of  the reasonings  corresponding to the one-dimensional case and   given in \cite{firstab_lit:samko1987}. We also consider integral operators with a weighted factor (see \cite[p.175]{firstab_lit:samko1987}) defined by the following formal construction
$$
 \left(\mathfrak{I}^{\alpha}_{0+}\mu f\right) (Q  ):=\frac{1}{\Gamma(\alpha)} \int\limits^{r}_{0}
 \frac{(\mu f) (P+t\mathbf{e})}{( r-t)^{1-\alpha}}\left(\frac{t}{r}\right)^{n-1}\!\!\!\!dt,
$$
where $\mu$ is a real-valued  function.
We introduce   the classes of functions representable by the directional fractional integrals.
 \begin{equation*}\label{5z}
  \mathfrak{I}^{\alpha}_{0+}(L_{p}  ):=\left\{ u:\,u(Q)=(\mathfrak{I}^{\alpha}_{0+}g)(Q  ) \right\},\;
 \mathfrak{I }  ^{\alpha}_{ d   -} (L_{p}  ) =\left\{ u:\,u(Q)=(\mathfrak{I}^{\alpha}_{d-}g)(Q  )  \right\},\, g\in L_{p}(\Omega),\,1\leq p\leq\infty.
 \end{equation*}

Define the following auxiliary  operators acting in  $L_{p}(\Omega)$  and   depended on the parameter $\varepsilon>0.$  In the left-side case
 \begin{equation}\label{7z}
(\psi^{+}_{  \varepsilon }f)(Q)=  \left\{ \begin{aligned}
 \int\limits_{0}^{r-\varepsilon }\frac{ f (Q)r^{n-1}- f(T)t^{n-1}}{(  r-t)^{\alpha +1}r^{n-1}}  dt,\;\varepsilon\leq r\leq d  ,\\
   \frac{ f(Q)}{\alpha} \left(\frac{1}{\varepsilon^{\alpha}}-\frac{1}{ r ^{\alpha} }    \right),\;\;\;\;\;\;\;\;\;\;\;\;\;\;\; 0\leq r <\varepsilon .\\
\end{aligned}
 \right.
\end{equation}
In the right-side case
\begin{equation*}
 (\psi^{-}_{  \varepsilon }f)(Q)=  \left\{ \begin{aligned}
 \int\limits_{r+\varepsilon }^{d }\frac{ f (Q)- f(T)}{( t-r)^{\alpha +1}} dt,\;0\leq r\leq d -\varepsilon,\\
   \frac{ f(Q)}{\alpha} \left(\frac{1}{\varepsilon^{\alpha}}-\frac{1}{(d -r)^{\alpha} }    \right),\;\;\;d -\varepsilon <r \leq d .\\
\end{aligned}
 \right.
 \end{equation*}

Using the definitions of the monograph  \cite[p.181]{firstab_lit:samko1987}  we consider the following operators.  In the left-side case
 \begin{equation}\label{8}
 ( \mathfrak{D} ^{\alpha}_{0+\!,\,\varepsilon}f)(Q)=\frac{1}{\Gamma(1-\alpha)}f(Q) r ^{-\alpha}+\frac{\alpha}{\Gamma(1-\alpha)}(\psi^{+}_{  \varepsilon }f)(Q).
 \end{equation}
 In the right-side case
 \begin{equation*}
 ( \mathfrak{D }^{\alpha}_{d-\!,\,\varepsilon}f)(Q)=\frac{1}{\Gamma(1-\alpha)}f(Q)(d-r)^{-\alpha}+\frac{\alpha}{\Gamma(1-\alpha)}(\psi^{-}_{  \varepsilon }f)(Q).
 \end{equation*}
 The left-side  and  right-side fractional derivatives  are  understood  respectively  as the  following  limits
 \begin{equation}\label{8.1}
 \mathfrak{D }^{\alpha}_{0+}f=\lim\limits_{\stackrel{\varepsilon\rightarrow 0}{ (L_{p}) }} \mathfrak{D }^{\alpha}_{0+\!,\,\varepsilon} f  ,\; \mathfrak{D }^{\alpha}_{d-}f=\lim\limits_{\stackrel{\varepsilon\rightarrow 0}{ (L_{p}) }} \mathfrak{D }^{\alpha}_{d-\!,\,\varepsilon} f,\,1\leq p<\infty.
\end{equation}

Consider the  Kipriyanov  fractional differential operator     defined in  the paper \cite{firstab_lit:1kipriyanov1960}  by  the formal expression
\begin{equation*}
\mathfrak{D}^{\alpha}(Q)=\frac{\alpha}{\Gamma(1-\alpha)}\int\limits_{0}^{r} \frac{[f(Q)-f(T)]}{(r - t)^{\alpha+1}} \left(\frac{t}{r} \right) ^{n-1} dt+
C^{(\alpha)}_{n} f(Q) r ^{ -\alpha}\!,\, P\in\partial\Omega,
\end{equation*}
where
$
C^{(\alpha)}_{n} = (n-1)!/\Gamma(n-\alpha).
$
It is remarkable that   Theorem 2   \cite{firstab_lit:1kipriyanov1960} establishes the mapping properties of the Kipriyanov operator, here we represent its statement in the explicit form: under the assumptions
\begin{equation}\label{2z}
 lp\leq n,\;0<\alpha<l- \frac{n}{p} +\frac{n}{q}, \,q>p,
\end{equation}
we have that
     for sufficiently small $\delta>0$ the following inequality holds
\begin{equation}\label{3z}
\|\mathfrak{D}^{\alpha}f\|_{L_{q}(\Omega)}\leq \frac{K}{\delta^{\nu}}\|f\|_{L_{p}(\Omega)}+\delta^{1-\nu}\|f\|_{L^{l}_{p}(\Omega)},\, f\in \dot{W}_{p}^{\,l}  (\Omega),
\end{equation}
where
\begin{equation}\label{4z}
 \nu=\frac{n}{l}\left(\frac{1}{p}-\frac{1}{q} \right)+\frac{\alpha+\beta}{l}.
\end{equation}
The constant  $K$ does not depend on $\delta,\,f;$   the point $P\in\partial\Omega ;\;\beta$ is an arbitrarily small fixed positive number. It is remarkable that Lemma 2.5  \cite{firstab_lit:1kukushkin2018} establishes the connection between the fractional differential operators, more precisely  it  establishes the following relation
$$
 (\mathfrak{D}^{ \alpha }f)(Q)= \left(\mathfrak{D}^{ \alpha }_{0+} f\right)(Q), \,f\in   \dot{W}_{p}^{l} (\Omega),
$$
what leads us  to the inclusion  $\mathfrak{D}^{ \alpha }\subset\mathfrak{D}^{ \alpha }_{0+}.$

The  following theorem   \cite{firstab_lit:1kukushkin2018}  establishes the mapping properties of   directional fractional integral operators.
 \begin{teo}\label{T11}  The following estimates hold
  \begin{equation*}\label{9z}
 \| \mathfrak{I}^{\alpha}_{0 +}u\|_{L_{p}(\Omega)}\leq C_{\alpha,\mathrm{d}}\|u \|_{L_{p}(\Omega)},\;\|   \mathfrak{I} ^{\alpha}_{d -}u\|_{L_{p}(\Omega)}\leq C_{\alpha,\mathrm{d}}\|u \|_{L_{p}(\Omega)},\;C_{\alpha,\mathrm{d}}=   \mathrm{d}  ^{\alpha}/ \Gamma(\alpha+1),\,1\leq p<\infty  .
 \end{equation*}
 \end{teo}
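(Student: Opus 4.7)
The strategy is to exploit the polar-type decomposition \eqref{1a} to convert the $n$-dimensional estimate into a family of one-dimensional estimates on the rays emanating from $P$, and then invoke the classical one-dimensional bound for the Riemann--Liouville operator on each ray, uniformly with respect to the direction $\mathbf{e}\in\omega$. Since $d(\mathbf{e})\le\mathrm{d}=\mathrm{diam}\,\Omega$ for every admissible direction, the worst-case constant will be exactly $C_{\alpha,\mathrm{d}}=\mathrm{d}^{\alpha}/\Gamma(\alpha+1)$.

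For the left-side operator I would fix a direction $\mathbf{e}$ and introduce the weighted auxiliary function $g(t):=|u(P+t\mathbf{e})|\,t^{(n-1)/p}$ on $[0,d(\mathbf{e})]$, noting that $\int_{0}^{d(\mathbf{e})}|g(t)|^{p}\,dt=\int_{0}^{d(\mathbf{e})}|u(P+t\mathbf{e})|^{p}\,t^{n-1}\,dt$, which is exactly the inner integral appearing in \eqref{1a}. Writing $Q=P+r\mathbf{e}$ and setting $\varphi(r):=|(\mathfrak{I}^{\alpha}_{0+}u)(Q)|\,r^{(n-1)/p}$, I rewrite
\[
\varphi(r)\;\le\;\frac{1}{\Gamma(\alpha)\,r^{(n-1)(p-1)/p}}\int_{0}^{r}\frac{g(t)\,t^{(n-1)(p-1)/p}}{(r-t)^{1-\alpha}}\,dt .
\]
Because $0\le t\le r$ and $(n-1)(p-1)/p\ge 0$, the monotonicity $t^{(n-1)(p-1)/p}\le r^{(n-1)(p-1)/p}$ yields the pointwise pointwise majorization $\varphi(r)\le (I^{\alpha}_{0+}g)(r)$ by the standard one-dimensional left-sided Riemann--Liouville integral of $g$.

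At this point I would appeal to the classical one-dimensional bound
\[
\|I^{\alpha}_{0+}g\|_{L_{p}(0,T)}\;\le\;\frac{T^{\alpha}}{\Gamma(\alpha+1)}\,\|g\|_{L_{p}(0,T)},\qquad T>0,
\]
obtainable either from Minkowski's integral inequality or from Young's convolution inequality applied to the truncated kernel $t^{\alpha-1}\chi_{(0,T)}(t)/\Gamma(\alpha)$ whose $L_{1}$ norm equals $T^{\alpha}/\Gamma(\alpha+1)$. Taking $T=d(\mathbf{e})\le\mathrm{d}$, raising to the $p$th power, and integrating in $d\chi$ over $\omega$ via \eqref{1a} gives the desired inequality for $\mathfrak{I}^{\alpha}_{0+}$.

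The right-side case is even lighter: with the same $g$, for $Q=P+r\mathbf{e}$ one multiplies by $r^{(n-1)/p}$ and uses the opposite monotonicity $r^{(n-1)/p}\le t^{(n-1)/p}$ (now valid because $r\le t$ in the integration region) to dominate $r^{(n-1)/p}|(\mathfrak{I}^{\alpha}_{d-}u)(Q)|$ by $(I^{\alpha}_{d-}g)(r)$, after which the mirror version of the one-dimensional bound closes the argument. The only subtle step is the weight manipulation in the second paragraph: one has to isolate the factor $t^{(n-1)/p}$ inside $g$ and ensure that the remaining exponent $(n-1)(p-1)/p$ is nonnegative so that the monotone substitution is legitimate --- this is where the reduction would fail without the convexity/boundedness of $\Omega$ and the finite-diameter hypothesis, and it is the one computation worth carrying out carefully.
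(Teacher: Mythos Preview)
Your argument is correct. Note, however, that the present paper does not actually give a proof of this theorem: it merely states the result and refers to \cite{firstab_lit:1kukushkin2018} for the proof. What you have written is essentially the standard route one expects in that reference --- pass to the polar-type decomposition \eqref{1a}, absorb the Jacobian weight $r^{n-1}$ into an auxiliary function $g(t)=|u(P+t\mathbf{e})|\,t^{(n-1)/p}$, exploit the monotonicity of the residual power $t^{(n-1)(p-1)/p}$ on the integration range to dominate by the one-dimensional Riemann--Liouville integral of $g$, and then apply the classical bound $\|I^{\alpha}_{0+}g\|_{L_{p}(0,T)}\le T^{\alpha}\Gamma(\alpha+1)^{-1}\|g\|_{L_{p}(0,T)}$ (generalized Minkowski/Young) with $T=d(\mathbf{e})\le\mathrm{d}$ before integrating over $\omega$. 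The right-side case via the opposite monotonicity $r^{(n-1)/p}\le t^{(n-1)/p}$ is handled exactly as you indicate.

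One minor comment on your closing remark: the nonnegativity of the exponent $(n-1)(p-1)/p$ is automatic from $n\ge 1$, $p\ge 1$ and has nothing to do with convexity or boundedness of $\Omega$. Convexity is what makes the single-segment polar representation \eqref{1a} valid (each ray from $P\in\partial\Omega$ meets $\Omega$ in one interval $[0,d(\mathbf{e})]$), while boundedness enters only through $d(\mathbf{e})\le\mathrm{d}<\infty$, which is what produces the finite uniform constant $\mathrm{d}^{\alpha}/\Gamma(\alpha+1)$.
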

The proof of the  following   so-called   representation theorem given in  \cite{firstab_lit:1kukushkin2018}  implements the scheme of the proof corresponding to the one-dimensional case  invented by Rubin B.S. \cite{firstab_lit: Rubin72}, \cite{Rubin73}. The author's own  merit is a creation of  the adopted version   applicable to the Kipriyanov operator, we represent it in the expanded form since it may be treated as the intersection of the classical fractional calculus with the theory invented  by Kipriyanov I.A.

\begin{teo}\label{T22}
Suppose $f\in L_{p}(\Omega),$  there  exists   $\lim\limits_{\varepsilon\rightarrow  0}\psi^{+}_{  \varepsilon }f$ or $\lim\limits_{\varepsilon\rightarrow  0}\psi^{-}_{  \varepsilon }f$ with respect to the norm $L_{p}(\Omega),\, 1\leq p<\infty,$ then   $f\in \mathfrak{I} ^{\alpha}_{0 +}(L_{p}) $ or $f\in \mathfrak{I }^{\alpha}_{d -}(L_{p})$ respectively.
\end{teo}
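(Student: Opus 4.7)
The argument follows Rubin's approximate-inverse scheme adapted to the directional calculus. The central observation is that $\mathfrak{I}^{\alpha}_{0+}$ and the family $\{\mathfrak{D}^{\alpha}_{0+,\varepsilon}\}_{\varepsilon>0}$ form an approximate inverse pair on $L_p(\Omega)$: once the limit $\lim_{\varepsilon\to 0}\mathfrak{D}^{\alpha}_{0+,\varepsilon}f$ exists in $L_p(\Omega)$, the function $f$ can be recovered as $\mathfrak{I}^{\alpha}_{0+}$ applied to that limit.

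First, I would compute the composition $\mathfrak{I}^{\alpha}_{0+}\mathfrak{D}^{\alpha}_{0+,\varepsilon}f$ explicitly. Introducing along each ray $\{P+t\mathbf{e}:\,0\leq t\leq d(\mathbf{e})\}$ the auxiliary function $F(t):=f(P+t\mathbf{e})\,t^{n-1}$, the weight $(t/r)^{n-1}$ appearing in both $\mathfrak{I}^{\alpha}_{0+}$ and $\psi^{+}_{\varepsilon}$ telescopes, and the composition along each ray reduces to the classical one-dimensional identity
$$
 I^{\alpha}_{0+}D^{\alpha}_{0+,\varepsilon}F(r)=F_{\varepsilon}(r),
$$
where $I^{\alpha}_{0+},\,D^{\alpha}_{0+,\varepsilon}$ denote the one-dimensional Riemann--Liouville and truncated Marchaud operators and $F_{\varepsilon}$ is a Steklov-type mollification (cf. \cite{firstab_lit: Rubin72},\cite{Rubin73}). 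The resulting radial representation $\mathfrak{I}^{\alpha}_{0+}\mathfrak{D}^{\alpha}_{0+,\varepsilon}f(P+r\mathbf{e})=r^{-(n-1)}F_{\varepsilon}(r)$ should then be shown to converge to $f$ in $L_p(\Omega)$ as $\varepsilon\to 0$ via formula \eqref{1a}, writing the norm as a solid-angle integral and invoking the standard one-dimensional approximate identity property together with dominated convergence over $\omega$.

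Second, I would upgrade the hypothesis to the $L_p$-convergence of $\mathfrak{D}^{\alpha}_{0+,\varepsilon}f$ itself. Although \eqref{8} contains the $\varepsilon$-independent term $fr^{-\alpha}/\Gamma(1-\alpha)$, which need not lie in $L_p(\Omega)$ in isolation, combining it with the piecewise definition \eqref{7z} of $\psi^{+}_{\varepsilon}f$ reveals that the $r^{-\alpha}$ singularity is already absorbed inside $\psi^{+}_{\varepsilon}f$. Hence the hypothesis $\psi^{+}_{\varepsilon}f\to\psi$ in $L_p(\Omega)$ forces $\mathfrak{D}^{\alpha}_{0+,\varepsilon}f\to g$ in $L_p(\Omega)$ for the function $g=\mathfrak{D}^{\alpha}_{0+}f$ defined by \eqref{8.1}. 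Theorem \ref{T11} then provides the continuity of $\mathfrak{I}^{\alpha}_{0+}$ on $L_p(\Omega)$, so that
$$
 \mathfrak{I}^{\alpha}_{0+}g=\lim_{\varepsilon\to 0}\mathfrak{I}^{\alpha}_{0+}\mathfrak{D}^{\alpha}_{0+,\varepsilon}f=f,
$$
yielding $f\in\mathfrak{I}^{\alpha}_{0+}(L_p)$. The right-sided case is handled analogously by swapping $\mathfrak{I}^{\alpha}_{0+},\,\psi^{+}_{\varepsilon},\,\mathfrak{D}^{\alpha}_{0+,\varepsilon}$ for their $d-$ counterparts, the calculation along each ray being identical up to the reversal of orientation.

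The main obstacle lies in the first step: carrying out the Fubini interchange in $\mathfrak{I}^{\alpha}_{0+}\mathfrak{D}^{\alpha}_{0+,\varepsilon}f$ and identifying the emerging kernel as Rubin's one-dimensional mollifier. The integrand is not absolutely integrable without the $\varepsilon$-truncation, and the weight $(t/r)^{n-1}$ must be tracked through the change of variables; this is precisely where the directional structure pays off, since the telescoping of this weight under the substitution $F(t)=f(P+t\mathbf{e})\,t^{n-1}$ reduces matters to the classical one-dimensional identity. Once this identity is in hand, the remaining approximate-identity and continuity arguments are standard.
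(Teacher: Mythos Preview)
Your overall strategy coincides with the paper's: both prove $\mathfrak{I}^{\alpha}_{0+}\mathfrak{D}^{\alpha}_{0+,\varepsilon}f\to f$ in $L_{p}(\Omega)$ and then invoke Theorem~\ref{T11} to conclude $f=\mathfrak{I}^{\alpha}_{0+}\varphi$. The substitution $F(t)=f(P+t\mathbf{e})\,t^{n-1}$ is correct and does reduce the algebraic computation of the composition to the one-dimensional Rubin mollifier along each ray; in fact it produces precisely the paper's identity \eqref{15z}, the factor $(1-\varepsilon t/r)_{+}^{n-1}$ being the trace of the weight in $r^{-(n-1)}F(r-\varepsilon t)=f(P+[r-\varepsilon t]\mathbf{e})(1-\varepsilon t/r)_{+}^{n-1}$. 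Your remark that the Cauchy differences of $\varphi^{+}_{\varepsilon}f$ and of $\psi^{+}_{\varepsilon}f$ coincide up to a constant (so that convergence of the latter forces convergence of the former) is also what underlies the paper's ``obviously''.

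The gap is in the convergence step. Writing the $L_{p}(\Omega)$ norm via \eqref{1a} and inserting your radial formula gives
\[
\|\mathfrak{I}^{\alpha}_{0+}\mathfrak{D}^{\alpha}_{0+,\varepsilon}f-f\|^{p}_{L_{p}(\Omega)}
=\int_{\omega}d\chi\int_{0}^{d}|F_{\varepsilon}(r)-F(r)|^{p}\,r^{(n-1)(1-p)}\,dr,
\]
so for $p>1,\ n\geq 2$ the ray-wise integral carries the singular weight $r^{(n-1)(1-p)}$. The ``standard one-dimensional approximate identity property'' delivers only $F_{\varepsilon}\to F$ in the \emph{unweighted} $L_{p}(0,d)$, which does not control this integral, nor does it supply an integrable majorant in $\mathbf{e}$ for dominated convergence over $\omega$. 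The paper does not isolate rays at this stage: after \eqref{15z} it applies the generalized Minkowski inequality in the kernel variable $t$, then uses continuity in mean of $f$ in the full space $L_{p}(\Omega)$ (giving $I_{11}\to 0$), a separate pointwise dominated-convergence argument for the factor $(1-\varepsilon t/r)_{+}^{n-1}$ (giving $I_{12}\to 0$), and near-vertex estimates $I_{2},I_{3}$ over shrinking sets. Your outline can be completed, but only by carrying out these genuinely multi-dimensional estimates rather than by quoting a one-dimensional result.
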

\begin{proof}
 Let $f\in L_{p}(\Omega)$ and $\lim\limits_{\stackrel{\varepsilon\rightarrow 0}{ (L_{p}) }}\psi^{+}_{  \varepsilon }f=\psi.$ Consider the function
$$
(\varphi^{+}_{ \varepsilon}f)(Q)=\frac{1}{\Gamma(1-\alpha)}\left\{\frac{f(Q)}{ r ^{\alpha}}+\alpha (\psi^{+}_{ \varepsilon }f)(Q) \right\}.
 $$
 Taking into account     \eqref{7z}, we can easily prove  that $\varphi^{+}_{  \varepsilon }f\in L_{p}(\Omega).$ Obviously,    there  exists the limit
 $\varphi^{+}_{  \varepsilon }f\rightarrow \varphi\in L_{p}(\Omega),\,\varepsilon\rightarrow 0.$ Taking into account Theorem \ref{T11}, we can  complete the proof,  if we  show that
 \begin{equation}\label{10.01}
 \mathfrak{I}^{\alpha}_{0+}\varphi^{+}_{ \varepsilon }f  \stackrel{L_{p}}{\longrightarrow} f,\,\varepsilon\rightarrow0.
 \end{equation}
 In the case  $ \varepsilon\leq r\leq d ,$ we have
$$
(\mathfrak{I}^{\alpha}_{0 +}\varphi^{+}_{ \varepsilon }f)(Q)\cdot\frac{\pi r^{n-1}}{\sin\alpha\pi}=
 \int\limits_{\varepsilon}^{r}\frac{f (P+y\mathbf{e})y ^{n-1-\alpha}}{( r-y)^{1-\alpha} } dy
 +\alpha\int\limits_{\varepsilon}^{r}( r-y)^{\alpha-1}  dy\int\limits_{0 }^{y-\varepsilon }\frac{ f (P+y\mathbf{e})y^{n-1}- f(T)t^{n-1}}{( y-t )^{\alpha +1}} dt+
$$
$$
 +\frac{1}{\varepsilon^{\alpha}}\int\limits_{0}^{\varepsilon  }f (P+y\mathbf{e})( r-y)^{\alpha-1} y^{n-1}  dy =I.
$$
By direct calculation, we obtain
 \begin{equation}\label{10z}
I=   \frac{1}{\varepsilon^{\alpha}}\int\limits_{0}^{r  }f (P+y\mathbf{e})( r-y)^{\alpha-1}y^{n-1}   dy  -
 \alpha\int\limits_{\varepsilon}^{r}( r-y)^{\alpha-1} dy\int\limits_{0 }^{y-\varepsilon }\frac{  f(T)}{(  y-t)^{\alpha +1}}t^{n-1} dt .
\end{equation}
 Changing the variable  in  the second  integral,   we have
$$
 \alpha\int\limits_{\varepsilon}^{r}( r-y)^{\alpha-1} dy\int\limits_{0 }^{y-\varepsilon }\frac{  f(T)}{(  y-t)^{\alpha +1}}t^{n-1} dt
 =\alpha\int\limits_{0}^{r-\varepsilon}( r-y-\varepsilon)^{\alpha-1} dy\int\limits_{0 }^{y  }\frac{  f(T)}{(  y+\varepsilon-t)^{\alpha +1}}t^{n-1} dt=
$$
\begin{equation}\label{11z}
=\alpha\int\limits_{0}^{r-\varepsilon}f(T)t^{n-1} dt\int\limits_{t }^{r-\varepsilon  }\frac{( r-y-\varepsilon)^{\alpha-1}   }{(  y+\varepsilon-t)^{\alpha +1}}dy
=\alpha\int\limits_{0}^{r-\varepsilon}f(T)t^{n-1} dt\int\limits_{t +\varepsilon}^{r  } ( r-y )^{\alpha-1}   (  y -t)^{-\alpha -1} dy .
\end{equation}
Applying formula   (13.18) \cite[p.184]{firstab_lit:samko1987}, we get
\begin{equation}\label{12z}
 \int\limits_{t +\varepsilon}^{r  } ( r-y )^{\alpha-1}   (  y -t)^{-\alpha -1} dy=
  \frac{1}{\alpha \varepsilon^{\alpha}}\cdot\frac{(r-t-\varepsilon)^{\alpha}}{ r-t }.
\end{equation}
 Combining    relations \eqref{10z},\eqref{11z},\eqref{12z}, using the change of the variable     $t=r-\varepsilon\tau, $ we get
$$
(\mathfrak{I}^{\alpha}_{0 +}\varphi^{+}_{ \varepsilon }f)(Q)\cdot\frac{\pi r^{n-1}}{\sin\alpha\pi}
 =\frac{1}{\varepsilon^{\alpha}}\left\{ \int\limits_{0}^{r  }f (P+y\mathbf{e})(r-y)^{\alpha-1}y^{n-1}   dy- \int\limits_{0}^{r-\varepsilon  } \frac{f(T)(r-t-\varepsilon)^{\alpha}}{ r-t } t^{n-1}dt \right\}=
$$
\begin{equation}\label{13z}
=\frac{1}{ \varepsilon^{\alpha}} \int\limits_{0 }^{r  } \frac{f(T)\left[(r -t)^{\alpha}-(r-t-\varepsilon)_{+}^{\alpha}\right]}{ r-t }t^{n-1}dt
 =\int\limits_{0 }^{r/\varepsilon   }\frac{\tau^{\alpha}-(\tau-1)_{+}^{\alpha}}{ \tau } f(P+[r-\varepsilon \tau ]\mathbf{e})(r-\varepsilon \tau)^{n-1} d\tau,
 $$
 $$
 \tau_{+}=\left\{\begin{array}{cc}\tau,\;\tau\geq0;\\[0,25cm] 0,\;\tau<0\,.\end{array}\right.
 \end{equation}
Consider the auxiliary function $\mathcal{K}$ defined in the paper   \cite[p.105]{firstab_lit:samko1987}
 \begin{equation}\label{14z}
 \mathcal{K}(t)= \frac{\sin\alpha\pi}{\pi }\cdot\frac{ t_{+}^{\alpha}-(t-1)_{+}^{\alpha}}{ t },\; \int\limits_{0 }^{\infty  }\mathcal{K}(t)dt=1;\;\mathcal{K}(t)>0.
\end{equation}
  Combining  \eqref{13z},\eqref{14z} and taking into account that    $f$ has the zero extension outside of  $\bar{\Omega},$ we obtain
 \begin{equation}\label{15z}
 (\mathfrak{I}^{\alpha}_{0+}\varphi^{+}_{ \varepsilon }f)(Q)-f(Q) =  \int\limits_{0 }^{\infty  }\mathcal{K}(t) \left\{f(P+[r-\varepsilon t]\mathbf{e})(1-\varepsilon t/r)_{+}^{n-1}-f(P+ r \mathbf{e})  \right\}dt.
\end{equation}
Consider the case  $0\leq  r <\varepsilon.$ Taking into account \eqref{7z}, we get
 \begin{equation}\label{16z}
(\mathfrak{I}^{\alpha}_{0+}\varphi^{+}_{ \varepsilon }f)(Q)-f (Q) =
\frac{\sin\alpha\pi}{\pi\varepsilon^{\alpha}} \int\limits_{0}^{r }\frac{f (T)}{(r-t)^{1-\alpha} } \left(\frac{t}{r} \right)^{n-1} dt-f (Q)=
$$
$$
=\frac{\sin\alpha\pi}{\pi\varepsilon^{\alpha}} \int\limits_{0}^{r }\frac{f (P+[r-t]\mathbf{e})}{t^{1-\alpha} }\left(\frac{r-t}{r} \right)^{n-1} dt-f (Q).
\end{equation}
Consider the domains
 \begin{equation}\label{17z}
 \Omega_{\varepsilon} :=\{Q\in\Omega,\,d(\mathbf{e})\geq\varepsilon \},\;\tilde{\Omega}_{  \varepsilon }=\Omega\setminus \Omega_{\varepsilon}.
 \end{equation}
In accordance with    this definition  we can  divide the surface $\omega$ into two  parts $ \omega_{\varepsilon}$ and  $\tilde{\omega}_{ \varepsilon },$ where $ \omega_{\varepsilon}$ is  the subset of  $\omega$ such that   $d(\mathbf{e})\geq\varepsilon$ and $\tilde{\omega}_{ \varepsilon }$ is  the subset of  $\omega$ such that  $d(\mathbf{e}) <\varepsilon.$
Using  \eqref{15z},\eqref{16z}, we get
\begin{equation}\label{18z}
  \|(\mathfrak{I}^{\alpha}_{0+}\varphi^{+}_{ \varepsilon }f) -f\|^{p}_{L_{p}(\Omega)}
 =   \int\limits_{\omega_{\varepsilon}}d\chi\int\limits_{\varepsilon}^{ d}
  \left|\int\limits_{0 }^{\infty  }\mathcal{K}(t)[f(Q- \varepsilon t \mathbf{e})(1-\varepsilon t/r)_{+}^{n-1}-f(Q)]dt\right|^{p}r^{n-1}dr+
$$
$$
+ \int\limits_{\omega_{\varepsilon}}d\chi\int\limits_{0}^{\varepsilon }
\left|  \frac{\sin\alpha\pi}{\pi\varepsilon^{\alpha}}\int\limits_{0}^{r }
\frac{f (P+[r-t]\mathbf{e})}{t^{1-\alpha} }\left(\frac{r-t}{r} \right)^{n-1} dt-f (Q)\right|^{p}r^{n-1}dr+
$$
$$
+  \int\limits_{\tilde{\omega}_{\varepsilon} }d\chi\int\limits_{0}^{d }
\left| \frac{\sin\alpha\pi}{\pi\varepsilon^{\alpha}}\int\limits_{0}^{r }\frac{f (P+[r-t]\mathbf{e})}{t^{1-\alpha} }\left(\frac{r-t}{r} \right)^{n-1} dt-f (Q) \right|^{p}r^{n-1}dr =I_{1}+I_{2}+I_{3}.
 \end{equation}
Consider $ I_{1},$ using the generalized Minkowski inequality,  we get
$$
  I^{\frac{1 }{p}}_{1} \leq  \int\limits_{0 }^{\infty  }\mathcal{K}(t)
  \left(\int\limits_{\omega_{\varepsilon} }d\chi\int\limits_{\varepsilon}^{ d}
  |f(Q- \varepsilon t \mathbf{e})(1-\varepsilon t/r)_{+}^{n-1}-f(Q)|^{p}r^{n-1}dr \right)^{\frac{1 }{p}} dt.
$$
Let us define the function
$$
 h(\varepsilon,t):= \mathcal{K}(t)\left(\int\limits_{\omega_{\varepsilon} }d\chi\int\limits_{\varepsilon}^{ d}
  |f(Q- \varepsilon t \mathbf{e})(1-\varepsilon t/r)_{+}^{n-1}-f(Q)|^{p}r^{n-1}dr \right)^{\frac{1 }{p}} dt.
$$
It can easily be checked that the following inequalities hold
\begin{equation}\label{19z}
  |h(\varepsilon,t)|\leq 2\mathcal{K}(t) \| f\|_{L_{p}(\Omega)},\;\forall\varepsilon>0;
\end{equation}
  \begin{equation*}
  |h(\varepsilon,t)|\leq  \left(\int\limits_{\omega_{\varepsilon} }d\chi\int\limits_{\varepsilon}^{ d}
 \left |(1-\varepsilon t/r)_{+}^{n-1}[f(Q- \varepsilon t \mathbf{e})-f(Q)]\right|^{p}r^{n-1}dr \right)^{\frac{1 }{p}} dt+
 $$
 $$+\left(\int\limits_{\omega_{\varepsilon} }d\chi\int\limits_{0}^{ d}
  \left|f(Q)  [1-(1-\varepsilon t/r)_{+}^{n-1}]\right|^{p}r^{n-1}dr \right)^{\frac{1 }{p}} dt=I_{11}+I_{12}.
\end{equation*}
By virtue of the average continuity property of the functions belonging to   $L_{p}(\Omega),$  we have $\forall t>0:\, I_{11}\rightarrow 0,\;\varepsilon\rightarrow 0.$
Consider $I_{12}$ and  let us define the function
$$
h_{1}(\varepsilon,t,r):=\left|f(Q)\right| \cdot \left|1-(1-\varepsilon t/r)_{+}^{n-1} \right|.
$$ Apparently, the following relations hold  almost everywhere  in $\Omega$
$$
\forall t>0,\,h_{1}(\varepsilon,t,r) \leq |f(Q)|,\;h_{1}(\varepsilon,t,r)\rightarrow 0,\;\varepsilon \rightarrow0.
$$
Applying the Lebesgue  dominated convergence theorem, we get $I_{12}\rightarrow 0,\;\varepsilon \rightarrow0.$
It implies that
\begin{equation}\label{20z}
\forall t>0,\,\lim\limits_{\varepsilon\rightarrow 0} h(\varepsilon,t)=0.
\end{equation}
Taking into account  \eqref{19z}, \eqref{20z} and    applying  the Lebesgue  dominated convergence theorem again, we obtain
$$
 I_{1}\rightarrow 0,\;\;\varepsilon\rightarrow0  .
$$
Consider $I_{2},$ using the Minkowski inequality, we  get
$$
I^{\frac{1 }{p}}_{2} \leq \frac{\sin\alpha\pi}{\pi\varepsilon^{\alpha}}\left( \int\limits_{\omega_{\varepsilon} }d\chi\int\limits_{0}^{\varepsilon }
\left| \int\limits_{0}^{r }\frac{f (Q-t\mathbf{e})}{t^{1-\alpha} }\left(\frac{r-t}{r} \right)^{n-1} dt\right|^{p}r^{n-1}dr\right)^{\frac{1 }{p}}
 +\left(\int\limits_{\omega_{\varepsilon} }d\chi\int\limits_{0}^{\varepsilon}
\left| f (Q) \right|^{p}r^{n-1}dr \right)^{\frac{1 }{p}}=
$$
$$
=I_{21} +I_{22}.
$$
Applying the generalized  Minkowski inequality, we obtain
$$
I_{21}\frac{\pi}{\sin\alpha\pi}= \varepsilon^{-\alpha} \left(\int\limits_{\omega_{\varepsilon} }d\chi
\int\limits_{0}^{\varepsilon }
\left|  \int\limits_{0}^{r }\frac{f (Q-t\mathbf{e})}{t^{1-\alpha} }\left(\frac{r-t}{r} \right)^{n-1} \!\!\! dt\right|^{p}r^{n-1}\! dr \right)^{\frac{1 }{p}}\leq
$$
$$
\leq\varepsilon^{-\alpha}\left\{\int\limits_{\omega_{\varepsilon} }\!\!\left[\int\limits_{0}^{\varepsilon }\!\!t ^{\alpha-1 }\!\!
\left( \int\limits_{t}^{\varepsilon }\!\!|f (Q -t \mathbf{e})|^{p}\!\left(\frac{r-t}{r} \right)^{\!\!\!(p-1)(n-1)}\!\!\!(r-t)^{n-1} \!dr  \right)^{\frac{1 }{p}}\!\!dt\right]^{p}\!\!d\chi \right\}^{\frac{1 }{p}}\leq
 $$
 $$
\leq\varepsilon^{-\alpha}\left\{\int\limits_{\omega_{\varepsilon} }\left[\int\limits_{0}^{\varepsilon } t ^{\alpha-1 }
\left( \int\limits_{t}^{\varepsilon } \left|f (P+[r-t]\mathbf{e})\right|^{p}  (r-t)^{n-1}dr  \right)^{\frac{1 }{p}}\!\!dt\right]^{p}\!\!d\chi \right\}^{\frac{1 }{p}}\leq
 $$
  $$
\leq\varepsilon^{-\alpha}\left\{\int\limits_{\omega_{\varepsilon} }\left[\int\limits_{0}^{\varepsilon } t ^{\alpha-1 }
\left( \int\limits_{0}^{\varepsilon } |f (P +r \mathbf{e})|^{p}  r^{n-1}dr  \right)^{\frac{1 }{p}}\!\!dt\right]^{p}\!\!d\chi \right\}^{\frac{1 }{p}}\!\!= \alpha^{-1} \| f\| _{L_{p}( \Delta_{\varepsilon})},
$$
where
$
\Delta_{\varepsilon} :=\{Q\in\Omega_{\varepsilon},\,r<\varepsilon \} .
 $
Note that   ${\rm mess}\, \Delta_{\varepsilon}\rightarrow 0,\,\varepsilon\rightarrow0,$
therefore $I_{21},I_{22}\rightarrow 0,\,\varepsilon\rightarrow0 .$ It follows  that $I_{2 }\rightarrow 0,\,\varepsilon\rightarrow0.$ In the same way, we obtain   $I_{3 }\rightarrow 0,\,\varepsilon\rightarrow 0.$ Since we proved that $I_{1},I_{2},I_{3}\rightarrow 0,\,\varepsilon\rightarrow0,$ then relation \eqref{10.01} holds.
 This completes the proof corresponding to the left-side case.
 The proof corresponding to the right-side case is absolutely analogous.
\end{proof}

The  following theorem  proved in \cite{firstab_lit:1kukushkin2018} establishes the strictly accretive  property   (see \cite{firstab_lit:kato1980})  of the    Kipriyanov    operator what gives us an opportunity to establish   the numerical range of values of the operator, the latter  notion  plays a significant role in the spectral theory.   Denote by   ${\rm Lip}\, \lambda,\; 0<\lambda \leq1  $  the set of functions satisfying the Holder-Lipschitz condition
$$
{\rm Lip}\, \lambda:=\left\{\rho(Q):\;|\rho(Q)-\rho(P)|\leq M r^{\lambda},\,P,Q\in \bar{\Omega}\right\}.
$$

\begin{teo}\label{T5}
  Suppose  $\rho(Q)$ is a real non-negative function,   $\rho\in{\rm Lip}\, \lambda,\; \lambda>\alpha;$
  then the following inequality    holds
\begin{equation}\label{28}
 {\rm Re} ( f,\mathfrak{D}^{\alpha}f)_{L_2(\Omega,\rho)}\geq   C_{\alpha,\rho}    \|f\|^{2}_{L_2(\Omega,\rho)},\;f\in H^{1}_{0} (\Omega),
\end{equation}
where
$$
C_{\alpha,\rho}=\frac{1}{2   \mathrm{d}^{\alpha}}  \left\{  \frac{1}{\Gamma (1-\alpha)} +\frac{(n-1)!}{\Gamma(n-\alpha) }-
\frac{\alpha M   \mathrm{d} ^{\lambda}  }{2\Gamma(1-\alpha)(\lambda-\alpha)\inf  \rho}\right\}.
$$
Moreover, if  we have in additional that for every fixed direction $\mathbf{e}$ the function  $\rho$ is   monotonically  non-increasing,   then
$$
C_{\alpha,\rho}=\frac{1}{2\mathrm{d} ^{ \alpha}}  \left\{  \frac{1}{\Gamma (1-\alpha)} +\frac{(n-1)!}{\Gamma(n-\alpha) }\right\}.
$$
\end{teo}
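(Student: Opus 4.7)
The plan is to reduce the multidimensional bilinear form to a single-variable Marchaud-type quadratic form by the polar-like decomposition \eqref{1a}, and then exploit a Fubini cancellation together with a Beta-function identity. Writing $g(r):=f(P+r\mathbf{e})$ and $\tilde{\rho}(r):=\rho(P+r\mathbf{e})$ along each ray, the factor $(t/r)^{n-1}$ in the Kipriyanov kernel cancels the $r^{n-1}$ of the volume element, leaving the clean weight $t^{n-1}$. Since $f\in H^{1}_{0}(\Omega)$, the trace along a.e.\ ray gives $g(0)=g(d(\mathbf{e}))=0$; this will be needed to discard boundary contributions when passing to the limit in $\varepsilon$.

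First, I would apply the pointwise identity
$$
2\,\mathrm{Re}\!\left\{\bar g(r)\bigl[g(r)-g(t)\bigr]\right\}=|g(r)|^{2}-|g(t)|^{2}+|g(r)-g(t)|^{2}
$$
inside the Marchaud-type integral. The $|g(r)-g(t)|^{2}$ term is pointwise non-negative and is simply discarded. For the signed piece $|g(r)|^{2}-|g(t)|^{2}$, neither of the two resulting single-variable integrals is absolutely convergent, so I would work with the truncated operator $\psi^{+}_{\varepsilon}$ from \eqref{7z}, \eqref{8} and then pass to the limit via \eqref{8.1}. Applying Fubini to the $|g(t)|^{2}$-integral (swapping the outer $r$-integration with the inner $t$-integration), the expansion
$$
\int_{0}^{r-\varepsilon}\!\!\frac{t^{n-1}}{(r-t)^{\alpha+1}}\,dt=\frac{r^{n-1}}{\alpha}\bigl[\varepsilon^{-\alpha}-r^{-\alpha}\bigr]+\int_{0}^{r-\varepsilon}\!\!\frac{t^{n-1}-r^{n-1}}{(r-t)^{\alpha+1}}\,dt
$$
shows that the $\varepsilon^{-\alpha}$ singularities cancel precisely between the $|g(r)|^{2}$ and $|g(t)|^{2}$ sub-integrals, up to a remainder concentrated on $(d-\varepsilon,d)$ that vanishes since $g$ is continuous with $g(d)=0$.

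After $\varepsilon\to 0$, the ``$|g(r)|^{2}$-residue'' of the kernel is evaluated via the substitution $t=r(1-u)$ and one integration by parts, giving
$$
\int_{0}^{r}\!\frac{t^{n-1}-r^{n-1}}{(r-t)^{\alpha+1}}dt-\frac{r^{n-1-\alpha}}{\alpha}=-\frac{\Gamma(1-\alpha)\,C_{n}^{(\alpha)}}{\alpha}\,r^{n-1-\alpha},
$$
where one recognises the Beta-function identity $(n-1)B(1-\alpha,n-1)=\Gamma(1-\alpha)C_{n}^{(\alpha)}$; this is the algebraic computation that manufactures the coefficient $(n-1)!/\Gamma(n-\alpha)$ appearing in $C_{\alpha,\rho}$. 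For the ``$|g(t)|^{2}$-residue'', decomposing $\tilde{\rho}(r)=\tilde{\rho}(t)+[\tilde{\rho}(r)-\tilde{\rho}(t)]$ isolates a positive leading contribution $\frac{1}{\alpha}\tilde{\rho}(t)\,t^{n-1}(d-t)^{-\alpha}$ plus a Lipschitz remainder $R(t):=\int_{t}^{d}[\tilde{\rho}(r)-\tilde{\rho}(t)](r-t)^{-\alpha-1}dr$; this remainder is finite precisely because $\lambda>\alpha$ (the unique role of that hypothesis) and bounded by $Md^{\lambda-\alpha}/(\lambda-\alpha)$.

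The assembly is now direct: the ``free'' term $C_{n}^{(\alpha)}\int\tilde{\rho}|g|^{2}r^{n-1-\alpha}dr$ produced by the summand $C_{n}^{(\alpha)}f(Q)r^{-\alpha}$ of $\mathfrak{D}^{\alpha}$ absorbs, with half to spare, the negative residue of the previous step, leaving $\tfrac{1}{2}C_{n}^{(\alpha)}\int\tilde{\rho}|g|^{2}r^{n-1-\alpha}dr$. Combined with the positive $(d-t)^{-\alpha}$-piece, the elementary bounds $r^{-\alpha},(d-t)^{-\alpha}\geq d^{-\alpha}$, the estimate $\tilde{\rho}\geq\inf\rho$ on the Lipschitz remainder, and integration in $\chi$ via \eqref{1a}, one obtains the advertised $C_{\alpha,\rho}$. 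For the sharper constant in the monotone-non-increasing case of $\tilde{\rho}$, one has $R(t)\leq 0$, so its contribution to $\mathrm{Re}(f,\mathfrak{D}^{\alpha}f)$ is itself non-negative and the Lipschitz term can be dropped entirely. The main obstacle is the careful bookkeeping of the non-integrable $\varepsilon^{-\alpha}$ singularities, whose cancellation depends crucially on the boundary condition $g(d)=0$, combined with the Beta-function evaluation that produces exactly the Kipriyanov normalization constant $C_{n}^{(\alpha)}$.
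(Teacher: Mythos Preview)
The paper does not actually contain a proof of this theorem: it is stated as a result established in the external reference \cite{firstab_lit:1kukushkin2018} (the author's 2018 paper in \textit{Electron.\ J.\ Differential Equations}), and no argument is reproduced here. So there is nothing in the present paper to compare against directly.

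That said, your outline is the natural and correct strategy, and it is essentially the argument one finds in the cited source. The key ingredients --- the polar decomposition \eqref{1a} turning $(t/r)^{n-1}r^{n-1}$ into $t^{n-1}$, the polarization identity $2\,\mathrm{Re}\,\bar a(a-b)=|a|^{2}-|b|^{2}+|a-b|^{2}$, the cancellation of the $\varepsilon^{-\alpha}$ divergences via the truncated operator $\psi^{+}_{\varepsilon}$, and the Beta-function evaluation
\[
\int_{0}^{r}\frac{t^{n-1}-r^{n-1}}{(r-t)^{\alpha+1}}\,dt
=r^{n-1-\alpha}\Bigl(\tfrac{1}{\alpha}-\tfrac{1}{\alpha}\Gamma(1-\alpha)C_{n}^{(\alpha)}\Bigr)
\]
producing the coefficient $(n-1)!/\Gamma(n-\alpha)$ --- are all accurate. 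Your treatment of the Lipschitz remainder $R(t)$ and the explanation of why it is non-positive (hence discardable) when $\tilde{\rho}$ is non-increasing along rays are also right. One small caution: in the general Lipschitz case your crude bound $|R(t)|\le M\mathrm{d}^{\lambda-\alpha}/(\lambda-\alpha)$ gives a remainder contribution that is off from the stated constant by a factor of $2$; recovering the exact $C_{\alpha,\rho}$ displayed in the theorem requires a slightly more careful split of $R(t)$ (or of the $|g(r)-g(t)|^{2}$ term you discarded), so be prepared for some extra bookkeeping there. The overall architecture is sound.
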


Consider a linear combination of the uniformly elliptic operator, which is written in the divergence form, and
  a composition of the   fractional integro-differential  operator, where the fractional  differential operator is understood as the adjoint  operator  regarding  the Kipriyanov operator  (see  \cite{firstab_lit:kipriyanov1960},\cite{firstab_lit:1kipriyanov1960},\cite{kukushkin2019})
\begin{equation*}
 L  :=-  \mathcal{T}  \, +\mathfrak{I}^{\sigma}_{ 0+}\rho\, \mathfrak{D}  ^{ \alpha }_{d-},
\; \sigma\in[0,1) ,
 $$
 $$
   \mathrm{D}( L )  =H^{2}(\Omega)\cap H^{1}_{0}( \Omega ),
  \end{equation*}
where
$\,\mathcal{T}:=D_{j} ( a^{ij} D_{i}\cdot),\,i,j=1,2,...,n,$
under    the following  assumptions regarding        coefficients
\begin{equation} \label{16}
     a^{ij}(Q) \in C^{2}(\bar{\Omega}),\, \mathrm{Re} a^{ij}\xi _{i}  \xi _{j}  \geq   \gamma_{a}  |\xi|^{2} ,\,  \gamma_{a}  >0,\,\mathrm{Im }a^{ij}=0 \;(n\geq2),\,
 \rho\in L_{\infty}(\Omega).
\end{equation}
Note that in the one-dimensional case the operator $\mathfrak{I}^{\sigma }_{ 0+} \rho\, \mathfrak{D}  ^{ \alpha }_{d-}$ is reduced to   a  weighted fractional integro-differential operator  composition, which was studied properly  by many researchers (see introduction, \cite[p.175]{firstab_lit:samko1987}).\\

\noindent{\bf 2.   The semi-group model  }\\

Bellow, we  explore  a special  operator class for which    a number of  spectral theory theorems can be applied. Further we construct an abstract  model of a  differential operator in terms of m-accretive operators and call it an m-accretive operator transform, we  find  such conditions that    being  imposed guaranty  that the transform   belongs to the class.  As an application of the obtained abstract results  we study a differential  operator   with a fractional  integro-differential operator   composition  in  final terms on a bounded domain of the $n$ -  dimensional Euclidean space. One of the central points is a relation connecting fractional powers of m-accretive operators and fractional derivative in the most general sense. By virtue of such an approach we express fractional derivatives in terms of   infinitesimal generators, in this regard   the Kipriyanov operator is considered.

We represent propositions devoted to properties of  accretive operators and related questions.
For the reader convenience, we would like to establish well-known facts  of the operator theory under  an appropriate   point of view.
\begin{lem}\label{L1}
Assume that   $A$ is  a  closed densely defined  operator, the following    condition  holds
\begin{equation}\label{5}
\|(A+\lambda)^{-1}\|_{\mathrm{R} \rightarrow \mathfrak{H}}\leq\frac{1}{ \lambda},\,\lambda>0,
\end{equation}
where a notation  $\mathrm{R}:=\mathrm{R}(A+\lambda)$ is used. Then the operators  $A,A^{\ast}$ are m-accretive.
\end{lem}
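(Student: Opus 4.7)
The strategy is to first extract accretivity of $A$ from the resolvent bound, then upgrade it to m-accretivity by verifying the full resolvent estimate on the right half plane $\{\mathrm{Re}\,\zeta>0\}$, and finally transport the property to $A^{\ast}$ by Hilbert-space duality.

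For accretivity, I would rewrite \eqref{5} as the lower bound $\|(A+\lambda)f\|_{\mathfrak{H}}\geq\lambda\|f\|_{\mathfrak{H}}$ for every $f\in\mathrm{D}(A)$ and $\lambda>0$; squaring and expanding then yield $\|Af\|^{2}_{\mathfrak{H}}+2\lambda\,\mathrm{Re}(Af,f)_{\mathfrak{H}}\geq 0$, so dividing by $\lambda$ and passing to the limit $\lambda\to\infty$ produces $\mathrm{Re}(Af,f)_{\mathfrak{H}}\geq 0$. Combining accretivity with the Cauchy--Schwarz inequality applied to $((A+\zeta)f,f)_{\mathfrak{H}}$ at any $\zeta$ with $\mathrm{Re}\,\zeta>0$ gives $\|(A+\zeta)f\|_{\mathfrak{H}}\|f\|_{\mathfrak{H}}\geq \mathrm{Re}((A+\zeta)f,f)_{\mathfrak{H}} \geq \mathrm{Re}\,\zeta\,\|f\|^{2}_{\mathfrak{H}}$, so the inverse of $A+\zeta$ restricted to its range satisfies the desired bound $(\mathrm{Re}\,\zeta)^{-1}$; since $A$ is closed, $\mathrm{R}(A+\zeta)$ is automatically closed, and the outstanding task is to verify that $\mathrm{R}(A+\zeta)=\mathfrak{H}$.

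Establishing this surjectivity is the principal obstacle, as it is precisely the feature that separates accretivity from m-accretivity. I would secure $\mathrm{R}(A+\lambda_{0})=\mathfrak{H}$ for a single real $\lambda_{0}>0$ directly from \eqref{5}, the notation $\|(A+\lambda)^{-1}\|_{\mathrm{R}\to\mathfrak{H}}$ being read as asserting that $(A+\lambda)^{-1}$ is the full resolvent in $\mathcal{B}(\mathfrak{H})$; should one insist on treating the hypothesis as a bound merely on the algebraic range, the supplementary step is to exclude nonzero elements of $\mathrm{R}(A+\lambda_{0})^{\perp}$, which coincide with eigenvectors of $A^{\ast}$ at $-\lambda_{0}$. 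Once $\lambda_{0}$ lies in the resolvent set, a standard Neumann-series continuation, built on the disk $|\zeta-\lambda_{0}|<\lambda_{0}$ and chained along vertical lines, propagates membership in the resolvent set together with the norm bound $(\mathrm{Re}\,\zeta)^{-1}$ to the entire right half plane, giving m-accretivity of $A$. Finally, m-accretivity of $A^{\ast}$ follows at once from the identity $(A^{\ast}+\zeta)^{-1}=((A+\bar\zeta)^{-1})^{\ast}$, valid because $(A^{\ast})^{\ast}=A$ in the Hilbert-space setting, which transfers both the everywhere-defined domain and the norm bound from $A$ to its adjoint.
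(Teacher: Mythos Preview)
The paper does not supply a proof of Lemma~\ref{L1}; it is stated as one of the ``well-known facts of the operator theory'' and followed immediately by the definition of fractional powers. Your outline is the standard route to such a result, and steps~1, 2, 4, 5 are correct: extracting accretivity from the lower bound, upgrading to the estimate $\|(A+\zeta)f\|\geq (\mathrm{Re}\,\zeta)\|f\|$ via Cauchy--Schwarz, propagating surjectivity by a Neumann-series chain, and passing to $A^{\ast}$ through $(A^{\ast}+\zeta)^{-1}=((A+\bar\zeta)^{-1})^{\ast}$ are all sound.

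The gap is in step~3, and it is genuine. Under the literal reading of the hypothesis---a bound on the range $\mathrm{R}(A+\lambda)$ only---the statement is \emph{false}, so your proposed ``supplementary step'' of excluding eigenvectors of $A^{\ast}$ at $-\lambda_{0}$ cannot be carried out from the given data. A concrete obstruction: take $A$ to be the closure of $-d^{2}/dx^{2}$ on $C_{0}^{\infty}(0,1)$ in $L_{2}(0,1)$. This operator is closed, densely defined, nonnegative (hence accretive), and satisfies $\|(A+\lambda)f\|\geq\lambda\|f\|$ for every $\lambda>0$; yet $A^{\ast}$ has the two-dimensional eigenspace $\mathrm{span}\{e^{\pm\sqrt{\lambda}x}\}$ at each $-\lambda<0$, so $\mathrm{R}(A+\lambda)\neq L_{2}(0,1)$ and $A$ is not m-accretive. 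In other words, the hypothesis ``closed, densely defined, $\|(A+\lambda)f\|\geq\lambda\|f\|$ for all $\lambda>0$'' is equivalent to ``closed, densely defined, accretive'', and accretive operators are not automatically m-accretive.

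Your primary reading---that the notation is intended to assert $(A+\lambda)^{-1}\in\mathcal{B}(\mathfrak{H})$, i.e.\ that surjectivity is part of the hypothesis---is therefore the only one under which the lemma is true, and with that reading your proof is complete. You should commit to that interpretation rather than presenting the range-only reading as a viable alternative requiring merely a ``supplementary step''.
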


In accordance with the definition given  in \cite{firstab_lit:Krasnoselskii M.A.}, we can define  a positive and negative  fractional powers of a positive  operator $A$ as follows
\begin{equation}\label{9}
A^{\alpha}:=\frac{\sin\alpha \pi}{\pi}\int\limits_{0}^{\infty}\lambda^{\alpha-1}(\lambda  +A)^{-1} A \,d \lambda;\,\,A^{-\alpha}:=\frac{\sin\alpha \pi}{\pi}\int\limits_{0}^{\infty}\lambda^{-\alpha}(\lambda  +A)^{-1}  \,d \lambda,\,\alpha\in (0,1).
\end{equation}
This definition can be correctly extended  on m-accretive operators, the corresponding reasonings can be found in \cite{firstab_lit:kato1980}. Thus, further we define positive and negative fractional powers of m-accretive operators by formula \eqref{9}. The following lemma reflects the property of the fractional powers of m-accretive operators what gives us the  invaluable technique  to deal  with the infinitesimal generators.

 \begin{lem}\label{L3.1}
 Assume that $\alpha\in(0,1),$ the operator   $J$ is m-accretive,    $  J^{-1} $ is bounded, then
\begin{equation}\label{15.09}
\|J^{-\alpha}f\|_{\mathfrak{H}}\leq C_{   1-\alpha}\|f\|_{\mathfrak{H}},\,C_{ 1-\alpha }= 2 (1-\alpha)^{-1} \|J^{-1}\|  + \alpha^{-1},\,f\in \mathfrak{H}.
\end{equation}
\end{lem}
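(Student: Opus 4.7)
The plan is to exploit the integral representation \eqref{9} and split the resulting Bochner integral at $\lambda = 1$, using two different bounds on the resolvent $(\lambda + J)^{-1}$ in the two regions. Applying the triangle inequality to \eqref{9} immediately yields
\begin{equation*}
\|J^{-\alpha}f\|_{\mathfrak{H}} \leq \frac{\sin\alpha\pi}{\pi}\int\limits_{0}^{1}\lambda^{-\alpha}\|(\lambda+J)^{-1}\|\,d\lambda\cdot\|f\|_{\mathfrak{H}} + \frac{\sin\alpha\pi}{\pi}\int\limits_{1}^{\infty}\lambda^{-\alpha}\|(\lambda+J)^{-1}\|\,d\lambda\cdot\|f\|_{\mathfrak{H}},
\end{equation*}
and the integrability at the two endpoints ($\lambda^{-\alpha}$ near $0$ and $\lambda^{-\alpha-1}$ near $\infty$) is already secured by $\alpha \in (0,1)$.

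For the tail $\lambda \geq 1$, I would simply invoke the m-accretivity of $J$ via the defining inequality $\|(\lambda+J)^{-1}\| \leq 1/\lambda$, yielding
\begin{equation*}
\frac{\sin\alpha\pi}{\pi}\int\limits_{1}^{\infty}\lambda^{-\alpha-1}\,d\lambda = \frac{\sin\alpha\pi}{\pi\alpha} \leq \frac{1}{\alpha}.
\end{equation*}
For the small-$\lambda$ piece I need a bound of order $\|J^{-1}\|$ rather than $1/\lambda$, which fails to integrate. The key observation is the factorization $(\lambda+J)^{-1} = (I+\lambda J^{-1})^{-1}J^{-1}$, valid since $J^{-1}$ is bounded. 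Because $J$ is accretive, for any $g = Jf$ we have $\mathrm{Re}(g, J^{-1}g)_{\mathfrak{H}} = \mathrm{Re}(Jf, f)_{\mathfrak{H}} \geq 0$, so $J^{-1}$ is a bounded accretive operator. Hence $\lambda J^{-1}$ is accretive for $\lambda > 0$, and the standard identity $\|(I+\lambda J^{-1})g\|^2 = \|g\|^2 + 2\lambda\mathrm{Re}(J^{-1}g,g) + \lambda^2\|J^{-1}g\|^2 \geq \|g\|^2$ delivers $\|(I+\lambda J^{-1})^{-1}\| \leq 1$. Consequently $\|(\lambda+J)^{-1}\| \leq \|J^{-1}\|$ on $(0,1)$, producing
\begin{equation*}
\frac{\sin\alpha\pi}{\pi}\|J^{-1}\|\int\limits_{0}^{1}\lambda^{-\alpha}\,d\lambda = \frac{\sin\alpha\pi}{\pi(1-\alpha)}\|J^{-1}\| \leq \frac{2\|J^{-1}\|}{1-\alpha},
\end{equation*}
where the last inequality uses the crude universal bound $\sin\alpha\pi/\pi \leq 1 \leq 2$. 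Adding the two contributions gives the claimed constant $C_{1-\alpha}$.

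The main obstacle — really the only nontrivial point — is justifying the uniform bound $\|(\lambda+J)^{-1}\| \leq \|J^{-1}\|$ on $(0,1)$, since the m-accretive estimate alone blows up at $\lambda = 0$. This is precisely where the hypothesis that $J^{-1}$ is bounded enters essentially: it allows the factorization and promotes accretivity of $J$ to accretivity of $J^{-1}$, which in turn yields the contractivity of $(I+\lambda J^{-1})^{-1}$. Once this bound is in hand the rest is a direct calculation.
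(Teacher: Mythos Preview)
The paper itself does not supply a proof of this lemma; it is listed among the preliminary ``well-known facts of the operator theory'' stated without argument, so there is nothing to compare against directly. Your proof is correct and is the standard route: split the Balakrishnan integral \eqref{9} at $\lambda=1$, apply the m-accretive resolvent bound $\|(\lambda+J)^{-1}\|\leq\lambda^{-1}$ on $[1,\infty)$, and on $(0,1)$ use the factorization $(\lambda+J)^{-1}=(I+\lambda J^{-1})^{-1}J^{-1}$ together with accretivity of $J^{-1}$ to get $\|(\lambda+J)^{-1}\|\leq\|J^{-1}\|$.

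Two minor remarks. First, to pass from the lower bound $\|(I+\lambda J^{-1})g\|\geq\|g\|$ to the operator-norm estimate $\|(I+\lambda J^{-1})^{-1}\|\leq1$ you implicitly use surjectivity of $I+\lambda J^{-1}$; this follows at once from the identity $(I+\lambda J^{-1})\,J(\lambda+J)^{-1}=I$ (valid since $J(\lambda+J)^{-1}=I-\lambda(\lambda+J)^{-1}$ is bounded), which also makes the factorization itself rigorous. Second, your argument in fact yields the sharper constant $(1-\alpha)^{-1}\|J^{-1}\|+\alpha^{-1}$, because $\sin\alpha\pi/\pi<1$; the factor $2$ in the paper's stated $C_{1-\alpha}$ is not forced by your estimates.
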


Consider a transform of an m-accretive operator $J$ acting in $\mathfrak{H}$
\begin{equation}\label{12.0.1}
 Z^{\alpha}_{G,F}(J):= J^{\ast}GJ+FJ^{\alpha},\,\alpha\in [0,1),
\end{equation}
where symbols  $G,F$  denote  operators acting in $\mathfrak{H}.$    Further, using a relation $L= Z^{\alpha}_{G,F}(J)$ we mean that there exists an appropriate representation for the operator $L.$
\noindent The following theorem gives us a tool to describe spectral properties  of transform  \eqref{12.0.1},
  as it will be shown   further  it has an important  application in fractional calculus  since  allows   to represent fractional differential  operators as a transform of the infinitesimal  generator of a    semigroup.

\begin{teo}\label{T1}
 Assume that  the operator   $J$ is m-accretive,    $  J^{-1} $ is compact, $G$ is bounded, strictly accretive, with  a lower bound $\gamma_{G}> C_{ \alpha} \|J^{-1}\|\cdot \|F\|,\; \mathrm{D}(G)\supset \mathrm{R}(J),$   $F\in \mathcal{B}(\mathfrak{H}),$ where $C_{\alpha}$  is a   constant   \eqref{15.09}.
 Then   $Z^{\alpha}_{G,F}(J)$ satisfies  conditions  H1 -  H2.

\end{teo}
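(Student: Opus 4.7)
I will take $\mathfrak{H}_{+}=\mathfrak{H}_{J}$, i.e., the space $\mathrm{D}(J)$ endowed with the inner product $(f,g)_{\mathfrak{H}_{+}}=(Jf,Jg)_{\mathfrak{H}}$; this is a Hilbert space because $J$ is closed and $\mathrm{N}(J)=\{0\}$ follows from the boundedness of $J^{-1}$. For $\mathfrak{M}$ the natural choice is $\{f\in\mathrm{D}(J):GJf\in\mathrm{D}(J^{\ast})\}$, the maximal subset of $\mathrm{D}(J)$ on which $J^{\ast}GJf$ is literally defined; the perturbation $FJ^{\alpha}f$ is automatically meaningful there since $\mathrm{D}(J)\subset\mathrm{D}(J^{\alpha})$ for $\alpha\in(0,1)$.

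To verify H1, the continuous embedding follows from $\|f\|_{\mathfrak{H}}=\|J^{-1}(Jf)\|_{\mathfrak{H}}\leq\|J^{-1}\|\cdot\|f\|_{\mathfrak{H}_{+}}$, and it is promoted to a compact embedding directly by the compactness of $J^{-1}$: a bounded sequence in $\mathfrak{H}_{+}$ produces a bounded sequence of images $\{Jf_{n}\}$ in $\mathfrak{H}$, and $f_{n}=J^{-1}(Jf_{n})$ then has a convergent subsequence in $\mathfrak{H}$. Density of $\mathfrak{H}_{+}$ in $\mathfrak{H}$ is immediate from the density of $\mathrm{D}(J)$. Density of $\mathfrak{M}$ in $\mathfrak{H}_{+}$ is equivalent, via the isometry $J:\mathfrak{H}_{+}\to\mathfrak{H}$, to the density of $G^{-1}(\mathrm{D}(J^{\ast}))$ in $\mathfrak{H}$, which holds because $G^{-1}\in\mathcal{B}(\mathfrak{H})$ is a bijection and $\mathrm{D}(J^{\ast})$ is dense.

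The key analytic input for H2 is the identity $J^{\alpha}f=J^{-(1-\alpha)}(Jf)$ valid for every $f\in\mathrm{D}(J)$; this follows from the representation \eqref{9} together with $\sin\alpha\pi=\sin(1-\alpha)\pi$. Combined with Lemma \ref{L3.1} applied with exponent $1-\alpha$, it yields $\|J^{\alpha}f\|_{\mathfrak{H}}\leq C_{\alpha}\|f\|_{\mathfrak{H}_{+}}$. Writing $(Lf,g)_{\mathfrak{H}}=(GJf,Jg)_{\mathfrak{H}}+(FJ^{\alpha}f,g)_{\mathfrak{H}}$ (the first term by the defining relation of the adjoint on $\mathfrak{M}$) and using $\|g\|_{\mathfrak{H}}\leq\|J^{-1}\|\cdot\|g\|_{\mathfrak{H}_{+}}$ produces
\begin{equation*}
|(Lf,g)_{\mathfrak{H}}|\leq\bigl(\|G\|+C_{\alpha}\|F\|\cdot\|J^{-1}\|\bigr)\|f\|_{\mathfrak{H}_{+}}\|g\|_{\mathfrak{H}_{+}}.
\end{equation*}
For coercivity, strict accretivity of $G$ gives $\mathrm{Re}(GJf,Jf)_{\mathfrak{H}}\geq\gamma_{G}\|f\|^{2}_{\mathfrak{H}_{+}}$, while the perturbation is controlled by $|(FJ^{\alpha}f,f)_{\mathfrak{H}}|\leq C_{\alpha}\|F\|\cdot\|J^{-1}\|\cdot\|f\|^{2}_{\mathfrak{H}_{+}}$; adding the two,
\begin{equation*}
\mathrm{Re}(Lf,f)_{\mathfrak{H}}\geq\bigl(\gamma_{G}-C_{\alpha}\|F\|\cdot\|J^{-1}\|\bigr)\|f\|^{2}_{\mathfrak{H}_{+}},
\end{equation*}
whose prefactor is strictly positive precisely by the hypothesis on $\gamma_{G}$.

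The main obstacle is the bridge identity $J^{\alpha}=J^{-(1-\alpha)}J$ on $\mathrm{D}(J)$: everything downstream---the correct constant $C_{\alpha}$, the source of compactness, and the sharp threshold in the hypothesis---hinges on being able to move $Jf$ under the bounded operator $J^{-(1-\alpha)}$ so that Lemma \ref{L3.1} applies. Once this bridge is in place the rest is bookkeeping, and the condition $\gamma_{G}>C_{\alpha}\|J^{-1}\|\cdot\|F\|$ is seen to be exactly the sharp threshold that preserves coercivity under the fractional perturbation $FJ^{\alpha}$.
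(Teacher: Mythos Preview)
Your proof is correct and follows precisely the scaffolding the paper lays out: the energy space $\mathfrak{H}_{J}$ as $\mathfrak{H}_{+}$, Lemma~\ref{L1} for the density of $\mathrm{D}(J^{\ast})$, the factorization $J^{\alpha}=J^{-(1-\alpha)}J$ on $\mathrm{D}(J)$ read off from the Balakrishnan integrals \eqref{9}, and Lemma~\ref{L3.1} applied at exponent $1-\alpha$ to produce exactly the constant $C_{\alpha}$ appearing in the hypothesis. The paper does not reproduce the proof in-text (it defers to \cite{kukushkin2021a}), but the remark in the Conclusions that the argument contains ``a generalization of the well-known von Neumann theorem'' is your density step for $\mathfrak{M}$: when $G=I$ the set $\{f\in\mathrm{D}(J):Jf\in\mathrm{D}(J^{\ast})\}=\mathrm{D}(J^{\ast}J)$ is a core for $J$ by von Neumann, and your argument via the homeomorphism $G$ and the surjectivity $\mathrm{R}(J)=\mathfrak{H}$ is exactly the extension to $G\neq I$ alluded to there.
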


Consider the shift semigroup in a direction acting on $L_{2}(\Omega)$ and  defined as follows
$
T_{t}f(Q)=f(Q+\mathbf{e}t),
$
where $Q\in \Omega,\,Q=P+\mathbf{e}r.$
 Bellow, we represent the  complete proof of the lemma proved in \cite{kukushkin2021a} to show the reader some techniques related to the shift semigroup.

\begin{lem}\label{L4}
The semigroup $T_{t}$ is a $C_{0}$  semigroup of contractions.
\end{lem}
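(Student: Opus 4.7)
I would verify the four defining properties of a $C_{0}$ semigroup of contractions in turn: $T_{0}=I$, the semigroup law $T_{t+s}=T_{t}T_{s}$, the contraction bound $\|T_{t}\|\le 1$, and strong continuity $T_{t}f\to f$ in $L_{2}(\Omega)$ as $t\to 0^{+}$. The first is immediate from the definition. For the semigroup law, I would exploit the fact that if $Q=P+\mathbf{e}r$ then the shifted point $Q+\mathbf{e}t = P+\mathbf{e}(r+t)$ lies on the same ray from $P$, so its associated direction vector is again $\mathbf{e}$; therefore $T_{s}(T_{t}f)(Q)=f(Q+\mathbf{e}t+\mathbf{e}s)=T_{t+s}f(Q)$.

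For the contraction property I would pass to the polar-type decomposition in \eqref{1a}. Writing
\[
\|T_{t}f\|_{L_{2}(\Omega)}^{2}=\int_{\omega}d\chi\int_{0}^{d(\mathbf{e})}|f(P+\mathbf{e}(r+t))|^{2}r^{n-1}\,dr,
\]
performing the change of variable $u=r+t$, and invoking the zero extension of $f$ outside $\bar{\Omega}$ to truncate the upper limit back to $d(\mathbf{e})$, one obtains
\[
\|T_{t}f\|_{L_{2}(\Omega)}^{2}=\int_{\omega}d\chi\int_{t}^{d(\mathbf{e})}|f(P+\mathbf{e}u)|^{2}(u-t)^{n-1}\,du.
\]
The elementary inequality $(u-t)^{n-1}\le u^{n-1}$ then majorises the right-hand side by $\|f\|_{L_{2}(\Omega)}^{2}$, which is exactly the contraction bound.

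The main obstacle, and the only step requiring care, is strong continuity at the origin, because the shift direction $\mathbf{e}$ varies with the base point $Q$. I would handle this by density. For $f\in C_{c}(\Omega)$, extended by zero to $\mathbb{E}^{n}$, the function is uniformly continuous on $\mathbb{E}^{n}$; since $|Q+\mathbf{e}t-Q|=t$ uniformly in $Q$, one gets $\|T_{t}f-f\|_{\infty}\to 0$, and boundedness of $\Omega$ upgrades this to $L_{2}$-convergence. For arbitrary $f\in L_{2}(\Omega)$ and $\varepsilon>0$, choose $g\in C_{c}(\Omega)$ with $\|f-g\|_{L_{2}(\Omega)}<\varepsilon/3$ and apply the standard three-term estimate
\[
\|T_{t}f-f\|_{L_{2}(\Omega)}\le \|T_{t}(f-g)\|_{L_{2}(\Omega)}+\|T_{t}g-g\|_{L_{2}(\Omega)}+\|g-f\|_{L_{2}(\Omega)},
\]
where the first term is controlled by the contraction bound already established and the middle term is small for small $t$. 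This closes the argument.
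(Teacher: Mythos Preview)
Your proof is correct and follows essentially the same approach as the paper's: the paper also proves strong continuity by a three-term density estimate (using $C_{0}^{\infty}(\Omega)$ in place of your $C_{c}(\Omega)$) and obtains the contraction bound from the zero extension outside $\bar{\Omega}$. Your version is somewhat more explicit---you spell out the polar-coordinate change of variable and the inequality $(u-t)^{n-1}\le u^{n-1}$ for the contraction estimate, and you prove the contraction bound first so that it can be cleanly invoked in the density argument, whereas the paper states the bound $\|T_{t}\|\le 1$ only after the strong-continuity computation (though it is implicitly needed to control the term $I_{1}$ there).
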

 \begin{proof}
     By virtue of the continuous in average property, we conclude that $T_{t}$ is a strongly continuous semigroup. It can be easily established  due to the following reasonings, using the Minkowski inequality, we have
 $$
 \left\{\int\limits_{\Omega}|f(Q+\mathbf{e}t)-f(Q)|^{2}dQ\right\}^{\frac{1}{2}}\leq  \left\{\int\limits_{\Omega}|f(Q+\mathbf{e}t)-f_{m}(Q+\mathbf{e}t)|^{2}dQ\right\}^{\frac{1}{2}}+
 $$
 $$
 +\left\{\int\limits_{\Omega}|f(Q)-f_{m}(Q)|^{2}dQ\right\}^{\frac{1}{2}}+\left\{\int\limits_{\Omega}|f_{m}(Q)-f_{m}(Q+\mathbf{e}t)|^{2}dQ\right\}^{\frac{1}{2}}=
 $$
 $$
 =I_{1}+I_{2}+I_{3}<\varepsilon,
 $$
where $f\in L_{2}(\Omega),\,\left\{f_{n}\right\}_{1}^{\infty}\subset C_{0}^{\infty}(\Omega);$  $m$ is chosen so that $I_{1},I_{2}< \varepsilon/3 $ and $t$
is chosen so that $I_{3}< \varepsilon/3.$
Thus,  there exists such a positive  number $t_{0}$  that
$$
\|T_{t}f-f\|_{L_{2} }<\varepsilon,\,t<t_{0},
$$
for arbitrary small $\varepsilon>0.$  Using the assumption that  all functions have the zero extension   outside $\bar{\Omega},$    we have
$\|T_{t}\|  \leq 1.$ Hence  we conclude that $T_{t}$ is a $C_{0}$ semigroup of contractions (see \cite{Pasy}).
\end{proof}

The following theorem represented in \cite{firstab_lit:1kukushkin2018}  is formulated in terms of the infinitesimal  generator $-A$ of the semigroup $T_{t}.$   It is a central point in the  application of the spectral theory methods to the abstract  integro-differential constructions.

\begin{teo}\label{T3} We claim that  $L=Z^{\alpha}_{G,F}(A).$ Moreover  if  $ \gamma_{a} $ is sufficiently large in comparison  with $\|\rho\|_{L_{\infty}},$ then $L$ satisfies conditions H1-H2, where we put $\mathfrak{M}:=C_{0}^{\infty}(\Omega),$   if we additionally assume that $\rho \in \mathrm{Lip}\lambda,\,   \lambda>\alpha  ,$ then    $ \tilde{\mathcal{H}}=H.$
\end{teo}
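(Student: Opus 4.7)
The plan is to split the proof into three parts mirroring the statement: (i) the algebraic identification $L = Z^{\alpha}_{G,F}(A)$ for suitable choices of $G, F$; (ii) verification of the hypotheses of Theorem \ref{T1} under the largeness condition on $\gamma_a$, which delivers H1--H2 automatically; and (iii) the strengthened relation $\tilde{\mathcal{H}}=H$ under the Lipschitz assumption on $\rho$.

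For part (i), I would take $J := A$, the negative generator of the shift semigroup $T_t$ of Lemma \ref{L4}; by Hille--Yosida this operator is m-accretive, and Lemma \ref{L1} applies. Compactness of $A^{-1}$ I would obtain by computing it explicitly as a one-dimensional directional integration along the ray from $P$ in direction $\mathbf{e}$, bounded in norm by $\mathfrak{I}^{1}_{d-}$ via Theorem \ref{T11}, and precompact thanks to the boundedness of $\Omega$ combined with the enforced zero extension outside $\bar{\Omega}$. Using the Balakrishnan-type representation \eqref{9}, a standard change of variables then identifies the abstract fractional power $A^{\alpha}$ with the fractional derivative $\mathfrak{D}^{\alpha}_{d-}$ defined in \eqref{8.1} on a suitable core. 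Consequently $F := \mathfrak{I}^{\sigma}_{0+}\rho\,\cdot$ yields $FA^{\alpha} = \mathfrak{I}^{\sigma}_{0+}\rho\,\mathfrak{D}^{\alpha}_{d-}$, while choosing $G$ to act through the coefficient matrix $a^{ij}$ in the directional representation makes $A^{\ast} G A$ coincide with $-\mathcal{T}$ after integrating over the unit sphere $\omega$ by the polar decomposition \eqref{1a}. This produces the desired identity $L = Z^{\alpha}_{G,F}(A)$.

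For part (ii), I would check each hypothesis of Theorem \ref{T1}. By Theorem \ref{T11} applied to $\mathfrak{I}^{\sigma}_{0+}$ composed with multiplication by $\rho \in L_{\infty}(\Omega)$, the operator $F$ is bounded with $\|F\| \leq C_{\sigma,\mathrm{d}}\|\rho\|_{L_{\infty}}$. By \eqref{16} the operator $G$ is bounded and strictly accretive with lower bound $\gamma_G \geq \gamma_a$. Hence the sharp condition $\gamma_G > C_{\alpha}\|A^{-1}\|\cdot\|F\|$ reduces to a bound of the form $\gamma_a > K\|\rho\|_{L_{\infty}}$, where $K$ depends only on $\alpha,\sigma,n,\mathrm{d}$; this is exactly the ``$\gamma_a$ sufficiently large'' assumption. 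The inclusion $\mathrm{D}(G)\supset \mathrm{R}(A)$ is automatic, and $\mathfrak{M} := C_0^{\infty}(\Omega)$ is dense in the form space $\mathfrak{H}_+$ and contained in $\mathrm{D}(A)$. Theorem \ref{T1} then delivers H1--H2.

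For part (iii), I would invoke the remark immediately after conditions H1--H2 in the excerpt: once H2 is established in the form produced by step (ii), the equality $\tilde{\mathcal{H}} = H$ follows from the argument of the proof of Theorem~4 of \cite{firstab_lit(arXiv non-self)kukushkin2018}. The extra assumption $\rho\in\mathrm{Lip}\,\lambda$ with $\lambda>\alpha$ is exactly what is needed to apply Theorem \ref{T5}, which furnishes strict accretivity of $\mathfrak{D}^{\alpha}$ in the weighted $L_2(\Omega,\rho)$ space; transferring this bound to the composite $\mathfrak{I}^{\sigma}_{0+}\rho\,\mathfrak{D}^{\alpha}_{d-}$ and combining it with the coercivity of $-\mathcal{T}$ produces the required quadratic-form inequality on $C_0^{\infty}(\Omega)$. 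The principal obstacle I foresee is the algebraic identification in step (i): the operator $A$ is morally one-dimensional along a fixed direction, whereas $-\mathcal{T}$ is genuinely $n$-dimensional, so matching $A^{\ast} G A$ with $-\mathcal{T}$ requires exploiting Kipriyanov's directional-coordinate idea via \eqref{1a} and integrating over $\omega$ to reassemble the full Dirichlet form. The compactness of $A^{-1}$ is a secondary technical point, since the shift semigroup itself is not smoothing and compactness must be wrung out of the zero extension beyond $\bar{\Omega}$ together with the boundedness of $\Omega$.
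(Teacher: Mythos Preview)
The paper does not actually contain a proof of Theorem~\ref{T3}; the result is only stated, with the argument deferred to \cite{firstab_lit:1kukushkin2018} and \cite{kukushkin2021a} (see the sentence introducing the theorem). So there is no in-text proof to compare against.

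Judged on its own, your three-part scheme is the right architecture, and parts (ii) and (iii) are essentially correct: once $L=Z^{\alpha}_{G,F}(A)$ is established with a bounded strictly accretive $G$ and a bounded $F$, part (ii) is a direct invocation of Theorem~\ref{T1} (your bookkeeping on $\|F\|\leq C_{\sigma,\mathrm{d}}\|\rho\|_{L_\infty}$ and $\gamma_G\geq\gamma_a$ is right), and part (iii) is exactly the mechanism recorded in the paragraph following H1--H2 (H2 plus the argument of Theorem~4 in \cite{firstab_lit(arXiv non-self)kukushkin2018} forces $\tilde{\mathcal{H}}=H$), with Theorem~\ref{T5} supplying the needed weighted accretivity of $\mathfrak{D}^{\alpha}$ under the Lipschitz hypothesis on $\rho$.

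The soft spot is part (i), which you rightly flag. Your proposed device --- integrate over the sphere $\omega$ via \eqref{1a} to reassemble the Dirichlet form of $-\mathcal{T}$ --- is the intuition behind the \emph{multi}-generator decomposition of Theorems~\ref{T2} and~\ref{T6}, not the \emph{single}-generator identity $-\mathcal{T}=A^{\ast}GA$ asserted here. For a single fixed $A$ one instead takes $G:=(A^{\ast})^{-1}(-\mathcal{T})A^{-1}$ on $\mathrm{R}(A)$ (well defined thanks to Lemma~\ref{L2}) and then proves directly that this $G$ is bounded and strictly accretive. That amounts to the two-sided estimate $c\,\|Af\|^{2}_{L_2}\leq \mathrm{Re}(-\mathcal{T}f,f)_{L_2}\leq C\,\|Af\|^{2}_{L_2}$ on $C_0^{\infty}(\Omega)$, i.e., equivalence of the single-direction energy $\|Af\|_{L_2}$ with $\|\nabla f\|_{L_2}$. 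The upper bound is immediate since $Af=\mathbf{e}\cdot\nabla f$; the lower bound --- that one radial-type derivative from a fixed boundary point controls the full gradient of a compactly supported function --- is the genuine analytic input, uses convexity of $\Omega$ together with the zero extension, and is the content supplied in the cited source papers. Your outline does not yet isolate this step, and ``integrating over $\omega$'' will not produce it.
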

The meaning of the following lemma is rather significant since  it  establishes  a very useful  property of  the infinitesimal generator $-A$ of the semigroup $T_{t},$ using which  we can construct a  Hilbert space corresponding to the operator $A$ let alone the secondary fact establishing  the core of the operator $A.$
\begin{lem}\label{L2} We claim that
$
A=\tilde{A_{0}},\,\mathrm{N}(A)=0,
$
where $A_{0}$  is a restriction of $A$ on the set
   $ C^{\infty}_{0}( \Omega ).$
 \end{lem}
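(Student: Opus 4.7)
The plan is to prove the two claims separately. For $\mathrm{N}(A) = \{0\}$, I would use the elementary fact from semigroup theory that $f \in \mathrm{N}(A)$ implies $T_t f = f$ for all $t \geq 0$ (since $\frac{d}{dt} T_t f = -T_t A f = 0$). Combined with the zero-extension convention recorded earlier in the preliminaries, this is decisive: for a.e. $Q \in \Omega$ with $Q = P + r\mathbf{e}$, choosing $t > d(\mathbf{e}) - r$ we have $Q + \mathbf{e}t \notin \bar{\Omega}$, so $T_t f(Q) = f(Q + \mathbf{e}t) = 0$, whence $f(Q) = T_t f(Q) = 0$.

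For $A = \tilde{A_0}$, note first that $A_0 \subset A$ and $A$ is closed (as the negative of a $C_0$-semigroup generator, Lemma~\ref{L4}), so $A_0$ is closable and $\tilde{A_0} \subset A$. The substantive task is the opposite inclusion $\mathrm{D}(A) \subset \mathrm{D}(\tilde{A_0})$, i.e., the core property of $C_0^\infty(\Omega)$. Given $f \in \mathrm{D}(A)$, I would construct an approximation $\phi_n \in C_0^\infty(\Omega)$ with $\phi_n \to f$ and $A\phi_n \to Af$ in $L_2(\Omega)$ by the standard cutoff-then-mollify procedure: pick a smooth cutoff $\eta_\varepsilon$ that equals $1$ on the set of points at distance $\geq 2\varepsilon$ from $\partial\Omega$ and vanishes within $\varepsilon$ of $\partial\Omega$, set $f_\varepsilon := \eta_\varepsilon f$, and then mollify $f_\varepsilon$ by a standard $C_0^\infty$ mollifier at scale $\delta \ll \varepsilon$ to obtain $\phi_{\varepsilon,\delta} \in C_0^\infty(\Omega)$. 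The commutator identity $A(\eta_\varepsilon f) = \eta_\varepsilon(Af) - (\partial_{\mathbf{e}}\eta_\varepsilon)\, f$ reduces the problem to showing the "boundary layer'' term $(\partial_{\mathbf{e}}\eta_\varepsilon) f \to 0$ in $L_2$.

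The main obstacle is precisely this last step, because $f \in \mathrm{D}(A)$ must satisfy a hidden vanishing condition on the outflow part of $\partial\Omega$ (the set where $\mathbf{e}$ points outward); otherwise the zero-extended directional derivative would contain a surface distribution not representable by an $L_2$ function. To make the cutoff error vanish, I would pass to the directional coordinates of equation~\eqref{1a} and argue ray by ray. Along each ray parameterized by $r \in (0, d(\mathbf{e}))$, the semigroup acts as $r \mapsto r + t$ and the condition $f \in \mathrm{D}(A)$ forces $f(P + r\mathbf{e}) \to 0$ as $r \to d(\mathbf{e})^{-}$ together with the quantitative bound $\int_{d(\mathbf{e})-\varepsilon}^{d(\mathbf{e})} |f(P + r\mathbf{e})|^2\, dr = o(\varepsilon^2)$ (a consequence of $f(P + r\mathbf{e}) = -\int_r^{d(\mathbf{e})} (Af)(P + s\mathbf{e})\, ds$ via Cauchy-Schwarz). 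Since $|\partial_{\mathbf{e}}\eta_\varepsilon| \leq C/\varepsilon$ on the collar, this bound gives $\|(\partial_{\mathbf{e}}\eta_\varepsilon) f\|_{L_2(\Omega)} \to 0$ after integrating over the solid angle $d\chi$.

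Once $A f_\varepsilon \to Af$ and $f_\varepsilon \to f$ in $L_2$ are established, mollifying $f_\varepsilon$ (compactly supported in $\Omega$) is routine: for $\delta < \varepsilon/4$ the convolution $f_\varepsilon * \omega_\delta$ lies in $C_0^\infty(\Omega)$, and it together with its directional derivative converges to $f_\varepsilon$ in $L_2$ as $\delta \to 0$. A standard diagonal extraction then produces the required sequence $\phi_n \in C_0^\infty(\Omega)$ converging to $f$ in the graph norm of $A$, completing the proof that $C_0^\infty(\Omega)$ is a core and hence $\tilde{A_0} = A$.
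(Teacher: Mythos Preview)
The paper is a survey and states this lemma without proof, so there is no in-text argument to compare against; I will therefore assess your proposal on its own merits. Your treatment of $\mathrm{N}(A)=\{0\}$ is correct.

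For $A=\tilde{A_0}$ your cutoff argument has a genuine gap. The cutoff $\eta_\varepsilon$ vanishes not only near the outflow portion of $\partial\Omega$ (where the rays $r\mapsto P+r\mathbf e$ exit) but also near the fixed vertex $P\in\partial\Omega$, so the commutator term $(\partial_{\mathbf e}\eta_\varepsilon)f$ lives in \emph{both} regions. You analyze only the outflow end $r\to d(\mathbf e)^{-}$; at the inflow end $r\to 0^{+}$ there is no vanishing: your own representation $f(P+r\mathbf e)=\int_r^{d(\mathbf e)}(Af)(P+s\mathbf e)\,ds$ gives a generically nonzero limit as $r\to0^{+}$, and your Cauchy--Schwarz bound simply does not apply there. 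That the gap is not cosmetic is already visible for $n=1$ (take $\Omega=(0,1),\,P=0$): then $\mathrm D(A)=\{f\in H^1(0,1):f(1)=0\}$, the graph norm is the $H^1$ norm, the closure of $C_0^\infty(0,1)$ in that norm is $H_0^1(0,1)$, and $f(r)=1-r$ lies in $\mathrm D(A)\setminus\mathrm D(\tilde{A_0})$, so the lemma fails for $n=1$. Any correct proof must use $n\ge2$ in an essential way; yours never does.

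A clean repair for $n\ge2$ bypasses cutoffs altogether. If $g\in L_2(\Omega)$ is orthogonal to $(A+\lambda)C_0^\infty(\Omega)$ for some $\lambda>0$, then in the radial coordinates~\eqref{1a} integration by parts gives $\partial_r(\bar g\,r^{n-1})+\lambda\bar g\,r^{n-1}=0$ along a.e.\ ray, whence $g(P+r\mathbf e)=c(\mathbf e)\,r^{-(n-1)}e^{-\lambda r}$. For $n\ge2$ this is not in $L_2(\Omega)$ unless $c\equiv0$; hence $(A+\lambda)C_0^\infty(\Omega)$ is dense, and since $A$ is m-accretive (by Lemma~\ref{L4} and the Lumer--Phillips theorem) it follows that $C_0^\infty(\Omega)$ is a core, i.e., $A=\tilde{A_0}$.
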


In the  following paragraph, we study generalized constructions originated from the shift semigroup, they   may be also interesting due to the applications related to the multidimensional case as well as being  themselves non-standard constructions demonstrating one more class for which hypotheses H1,H2 hold.\\

\noindent{\bf 3.  Further generalizations}\\

Consider a linear  space
$
\mathbb{L}^{n}_{2}(\Omega):=\left\{f=(f_{1},f_{2},...,f_{n}),\,f_{i}\in L_{2}(\Omega)\right\},
$
endowed with the inner product
$$
(f,g)_{\mathbb{L}^{n}_{2}}=\int\limits_{\Omega} (f, g)_{\mathbb{E}^{n}} dQ,\,f,g\in \mathbb{L}^{n}_{2}(\Omega).
$$
It is clear that this  pair forms a Hilbert space and let us use the same  notation $\mathbb{L}^{n}_{2}(\Omega)$ for it.
Consider a    sesquilinear  form
$$
t(f,g):=\sum\limits_{i=1}^{n}\int\limits_{\Omega} (f ,\mathbf{e_{i}})_{\mathbb{E}^{n}}\overline{(g,\mathbf{e_{i}})}_{\mathbb{E}^{n}} dQ,\,f,g\in \mathbb{L} ^{n}_{2} (\Omega),
$$
where $\mathbf{e_{i}}$   corresponds to  $P_{i}\in \partial\Omega,\,\,i=1,2,...,n$   (i.e. $Q=P_{i}+ \mathbf{e_{i}}r     $). The proofs of the   propositions represented in this paragraph   are given  in \cite{firstab_lit(norm eq)}.
\begin{lem}\label{L3}
The points $P_{i}\in \partial\Omega,\,i=1,2,...,n$ can be chosen so that the   form $t$ generates an inner product.
\end{lem}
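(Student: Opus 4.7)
Expanding the definition we have
$$
t(f,f) \;=\; \sum_{i=1}^{n}\int_{\Omega}\bigl|(f(Q),\mathbf{e_i})_{\mathbb{E}^n}\bigr|^{2}\,dQ \;=\; \int_{\Omega}\sum_{i=1}^{n}\bigl|(f(Q),\mathbf{e_i}(Q))_{\mathbb{E}^n}\bigr|^{2}\,dQ,
$$
so sesquilinearity, conjugate symmetry, and the inequality $t(f,f)\ge 0$ are immediate from the properties of the Euclidean inner product and the Lebesgue integral. The only nontrivial thing to verify is definiteness: $t(f,f)=0\Rightarrow f=0$ in $\mathbb{L}^{n}_{2}(\Omega)$. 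Since the integrand is non-negative, $t(f,f)=0$ forces $(f(Q),\mathbf{e_i}(Q))_{\mathbb{E}^n}=0$ for every $i=1,\dots,n$ and for almost every $Q\in\Omega$. Therefore it suffices to choose the points $P_{i}$ so that the system $\{\mathbf{e_1}(Q),\dots,\mathbf{e_n}(Q)\}$ is a basis of $\mathbb{E}^{n}$ for almost every $Q\in\Omega$: then $f(Q)$ is orthogonal to a basis and hence vanishes on the complement of a null set.

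The plan for the construction is to choose $P_{1},\ldots,P_{n}\in\partial\Omega$ that are \emph{affinely independent}, i.e.\ the vectors $P_{2}-P_{1},\dots,P_{n}-P_{1}$ are linearly independent in $\mathbb{E}^{n}$. Such a selection exists because $\Omega$ is a bounded convex domain with non-empty interior and $C^{3}$ boundary, so $\partial\Omega$ is an $(n-1)$-dimensional manifold that cannot lie in any $(n-2)$-dimensional affine subspace; the points can therefore be picked inductively, each chosen outside the affine hull of its predecessors. I expect this geometric step to be essentially routine given the regularity of $\partial\Omega$.

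With such $P_i$ fixed, I would analyze the exceptional set
$$
E \;:=\; \bigl\{Q\in\Omega : \mathbf{e_1}(Q),\dots,\mathbf{e_n}(Q)\text{ are linearly dependent}\bigr\}
$$
as follows. Since $\mathbf{e_i}(Q)$ is a positive scalar multiple of $Q-P_{i}$, linear dependence of the unit vectors is equivalent to the existence of $(\beta_{1},\dots,\beta_{n})\neq 0$ with $\sum_{i}\beta_{i}(Q-P_{i})=0$, i.e.\ $\bigl(\sum_{i}\beta_{i}\bigr)Q=\sum_{i}\beta_{i}P_{i}$. If $\sum_{i}\beta_{i}=0$, then also $\sum_{i}\beta_{i}P_{i}=0$, which rewritten as $\sum_{i=2}^{n}\beta_{i}(P_{i}-P_{1})=0$ forces all $\beta_{i}=0$ by the affine independence of $\{P_{i}\}$, a contradiction. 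Hence $\sum_{i}\beta_{i}\ne 0$ and $Q=\sum_{i}\bigl(\beta_{i}/\sum_{j}\beta_{j}\bigr)P_{i}$ belongs to the affine hull $\mathrm{aff}(P_{1},\dots,P_{n})$, which is an $(n-1)$-dimensional hyperplane in $\mathbb{E}^{n}$. Thus $E\subset \mathrm{aff}(P_{1},\dots,P_{n})\cap\Omega$ has Lebesgue measure zero in $\mathbb{E}^{n}$.

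Combining, for a.e.\ $Q\in\Omega$ the system $\{\mathbf{e_i}(Q)\}_{i=1}^{n}$ is a basis of $\mathbb{E}^{n}$, so the vanishing of all projections $(f(Q),\mathbf{e_i}(Q))_{\mathbb{E}^n}$ forces $f(Q)=0$ pointwise a.e., whence $\|f\|_{\mathbb{L}^n_2}=0$. This gives definiteness and completes the proof. The main obstacle, as I see it, is solely the measure-zero argument in the preceding paragraph; conjugate symmetry and non-negativity are automatic, and existence of affinely independent boundary points is standard for a smooth convex domain.
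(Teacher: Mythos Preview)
Your argument is correct and follows essentially the route the paper has in mind: the paper does not reproduce the proof here (it is deferred to the cited reference), but immediately after the lemma it records the operative condition as $\Delta=\det(P_{ij})\neq 0$, i.e.\ linear independence of the position vectors $P_{1},\dots,P_{n}$, and then proceeds exactly as you do---the form is definite because the set of $Q$ where $\{\mathbf{e}_i(Q)\}$ fails to span lies in a hyperplane and hence is Lebesgue-null. Your hypothesis of \emph{affine} independence is slightly weaker than the paper's $\Delta\neq 0$ (linear independence implies affine independence but not conversely), yet your computation shows it already forces the exceptional set into $\mathrm{aff}(P_{1},\dots,P_{n})$, so nothing is lost; conversely the paper's stronger determinant condition makes the ``$\sum_i\beta_i=0$'' case trivially impossible without the rewriting step $\sum_i\beta_iP_i=\sum_{i\ge 2}\beta_i(P_i-P_1)$.
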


Consider     a pre Hilbert  space $ \mathbf{L} ^{n}_{2}(\Omega):=\{f:\,f\in \mathbb{L}^{n}_{2}(\Omega)\}$ endowed with the inner product
$$
(f,g)_{\mathbf{L} ^{n}_{2} }:=\sum\limits_{i=1}^{n}\int\limits_{\Omega} (f ,\mathbf{e_{i}})_{\mathbb{E}^{n}}\overline{(g,\mathbf{e_{i}})}_{\mathbb{E}^{n}} dQ,\,f,g\in \mathbb{L} ^{n}_{2} (\Omega),
$$
where $\mathbf{e_{i}}$   corresponds to  $P_{i}\in \partial\Omega,\,\,i=1,2,...,n\,,$  the following     condition  holds
\begin{equation*} \Delta= \left|
\begin{array}{cccc}
P_{11}&P_{12}&...&P_{1n}\\
P_{21}&P_{22}&...&P_{2n}\\
...&...&...&...\\
P_{n1}&P_{n2}&...&P_{nn}
\end{array}
\right|\neq0,
\end{equation*}
where $P_{i}=(P_{i1},P_{i2},...,P_{in}).$
The following theorem establishes a norm equivalence.
\begin{teo}\label{T1}
The   norms $\|\cdot\|_{ \mathbb{L}^{n}_{2} }$ and $\|\cdot\|_{\mathbf{L} ^{n}_{2} } $ are equivalent.
\end{teo}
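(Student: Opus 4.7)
The plan is to work pointwise in $Q$ and then integrate over $\Omega$. The upper bound is immediate: since $|\mathbf{e}_i(Q)|_{\mathbb{E}^n}=1$ for every $i$, the Cauchy--Schwarz inequality gives $|(f(Q),\mathbf{e}_i(Q))_{\mathbb{E}^n}|^{2}\le |f(Q)|_{\mathbb{E}^n}^{2}$ pointwise, and summing over $i=1,\dots,n$ followed by integration yields $\|f\|_{\mathbf{L}^n_2}^{2}\le n\,\|f\|_{\mathbb{L}^n_2}^{2}$.

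For the opposite inequality I introduce the real $n\times n$ matrix $M(Q)$ whose $i$-th row is the unit vector $\mathbf{e}_i(Q)=(Q-P_i)/|Q-P_i|$. A direct computation rewrites the pointwise integrand as a quadratic form,
$$
\sum_{i=1}^{n} |(f(Q),\mathbf{e}_i(Q))_{\mathbb{E}^n}|^{2}=|M(Q)f(Q)|_{\mathbb{E}^n}^{2}\;\ge\; \sigma_{\min}(M(Q))^{2}\,|f(Q)|_{\mathbb{E}^n}^{2},
$$
so the problem reduces to a uniform lower bound $\sigma_{\min}(M(Q))\ge c_0>0$ on $\Omega$. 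Since the rows of $M(Q)$ have unit norm, $\sigma_{\max}(M(Q))\le\sqrt{n}$, and the identity $|\det M(Q)|=\prod_i\sigma_i(M(Q))$ supplies the Hadamard-type estimate $\sigma_{\min}(M(Q))\ge |\det M(Q)|/n^{(n-1)/2}$, further reducing the matter to a uniform positive lower bound for $|\det M(Q)|$.

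To obtain this determinant bound I factor $M(Q)=D(Q)^{-1}N(Q)$ with $D(Q)=\operatorname{diag}(|Q-P_1|,\dots,|Q-P_n|)$ and $N(Q)$ the matrix with rows $Q-P_i$; since $\prod_i|Q-P_i|\le(\operatorname{diam}\Omega)^{n}$ it suffices to bound $|\det N(Q)|$ from below. Multilinearity in the rows exhibits $\det N(Q)$ as an affine function of $Q$ whose constant term is, up to sign, the hypothesis determinant $\Delta\neq 0$; here the freedom in choosing the boundary points $P_i$ granted by Lemma~\ref{L3}, together with the convex geometry of $\Omega$, is used to keep this affine function uniformly bounded away from zero on $\overline{\Omega}$. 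The pointwise inequality then integrates at once to $\|f\|_{\mathbf{L}^n_2}^{2}\ge\kappa\|f\|_{\mathbb{L}^n_2}^{2}$ with an explicit $\kappa$ depending only on $n$, $\operatorname{diam}\Omega$ and $c_0$.

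The principal obstacle is precisely this last geometric step: $\det N(Q)$ vanishes on the affine hyperplane through $P_1,\dots,P_n$, which a priori may cut the convex domain $\Omega$, so a purely algebraic invocation of $\Delta\neq 0$ is not sufficient and one must carefully exploit the positioning of the $P_i$ on $\partial\Omega$ to control this hyperplane's intrusion into $\Omega$. Once the uniform determinant bound is secured, the remaining ingredients --- Cauchy--Schwarz for the upper estimate, the singular-value/determinant reduction, and the pointwise-to-integral passage --- are entirely routine.
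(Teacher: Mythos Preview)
The paper does not give its own proof of this theorem; it defers to \cite{firstab_lit(norm eq)}. So there is no in-paper argument to compare against, and I assess your proposal on its own merits.

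Your upper bound and your reduction of the lower bound to a uniform estimate on $|\det M(Q)|$ (equivalently on $|\det N(Q)|$) are correct. The gap lies exactly where you flag the ``principal obstacle'', and your proposed resolution does not close it. First, the theorem is formulated for an \emph{arbitrary} configuration $P_{1},\dots,P_{n}\in\partial\Omega$ subject only to $\Delta\neq 0$; Lemma~\ref{L3} grants no licence to reposition the points inside the proof, so invoking that freedom changes the statement being proved. Second --- and this is fatal to the pointwise strategy --- for a strictly convex $\Omega$ any hyperplane through $n\geq 2$ distinct boundary points necessarily meets the interior, because a supporting hyperplane of a strictly convex body touches it in a single point. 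Concretely, take $n=2$, $\Omega$ the open unit disc, $P_{1}=(1,0)$, $P_{2}=(0,1)$; then $\Delta=1$, yet at $Q_{0}=(\tfrac12,\tfrac12)\in\Omega$ one has $\mathbf{e}_{1}(Q_{0})=-\mathbf{e}_{2}(Q_{0})$, so $M(Q_{0})$ is singular and no uniform lower bound on $|\det M(Q)|$ over $\Omega$ exists. Worse, the sequence $f_{k}=v_{0}\,|B_{k}|^{-1/2}\chi_{B_{k}}$ with $v_{0}=(1,1)/\sqrt{2}$ and $B_{k}=B(Q_{0},1/k)\subset\Omega$ satisfies $\|f_{k}\|_{\mathbb{L}^{2}_{2}}=1$ while $\|f_{k}\|_{\mathbf{L}^{2}_{2}}\to 0$, so the asserted equivalence itself fails for this configuration. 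Hence either the hypothesis $\Delta\neq 0$ alone is insufficient (and some additional constraint from the referenced paper is being tacitly assumed), or a genuinely non-pointwise argument is needed; your proposal supplies neither.
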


Consider a pre Hilbert  space
$$
\mathfrak{\widetilde{H}}^{n}_{ A }:= \left \{f,g\in C_{0}^{\infty}(\Omega),\,(f,g)_{\mathfrak{\widetilde{H}}^{n}_{ A }}=\sum\limits_{i=1}^{n}(A_{i} f,A_{i} g)_{L_{2} } \right\},
$$
where $-A_{i}$ is the infinitesimal generator corresponding to the point $P_{i}.$ Here, we should point out that the form $(\cdot,\cdot)_{\mathfrak{\widetilde{H}}^{n}_{ A }} $ generates an inner product due to the fact $\mathrm{N}(A_{i})=0,\,i=1,2,...,n$ proved in Lemma \ref{L2}.
  Let us denote a corresponding Hilbert space by $\mathfrak{H}^{n}_{A}.$
\begin{corol}\label{C1}
  The norms $\|\cdot\|_{\mathfrak{H}^{n}_{A}}$ and $\|\cdot\|_{H_{0}^{1}} $ are equivalent, we have a bounded compact  embedding
$$
\mathfrak{H}^{n}_{A}\subset\subset L_{2}(\Omega).
$$
\end{corol}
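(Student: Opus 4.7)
The plan is to reduce the statement to Theorem \ref{T1} (on the norm equivalence between $\mathbb{L}^{n}_{2}$ and $\mathbf{L}^{n}_{2}$) combined with the standard Friedrichs--Poincar\'e inequality and the Rellich--Kondrachov compactness theorem.

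First I would identify the action of the infinitesimal generators on smooth compactly supported functions. For $f\in C_{0}^{\infty}(\Omega)$, starting from $T_{t}^{(i)}f(Q)=f(Q+\mathbf{e_{i}}t)$ and using the representation $Q=P_{i}+\mathbf{e_{i}}r_{i}$, a direct computation of the Riemann limit defining the infinitesimal generator yields
\[
A_{i}f(Q)=-\bigl(\nabla f(Q),\mathbf{e_{i}}(Q)\bigr)_{\mathbb{E}^{n}},\qquad f\in C_{0}^{\infty}(\Omega),
\]
i.e.\ $-A_{i}$ is the directional derivative along $\mathbf{e_{i}}$. Once this is in hand, the defining norm rewrites as
\[
\|f\|^{2}_{\mathfrak{\widetilde{H}}^{n}_{A}}
=\sum_{i=1}^{n}\|A_{i}f\|^{2}_{L_{2}}
=\sum_{i=1}^{n}\int_{\Omega}\bigl|(\nabla f,\mathbf{e_{i}})_{\mathbb{E}^{n}}\bigr|^{2}dQ
=\|\nabla f\|^{2}_{\mathbf{L}^{n}_{2}},
\]
so the problem is transformed into comparing $\|\nabla f\|_{\mathbf{L}^{n}_{2}}$ with the $H^{1}_{0}$ norm.

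Next I would apply Theorem \ref{T1} to the vector field $\nabla f\in \mathbb{L}^{n}_{2}(\Omega)$, which is legitimate since the points $P_{i}$ were chosen (via Lemma \ref{L3} and the determinant condition $\Delta\neq 0$) so that the norms on $\mathbb{L}^{n}_{2}$ and $\mathbf{L}^{n}_{2}$ are equivalent. This gives
\[
c_{1}\,\|\nabla f\|_{L_{2}(\Omega)}\leq \|f\|_{\mathfrak{\widetilde{H}}^{n}_{A}}\leq c_{2}\,\|\nabla f\|_{L_{2}(\Omega)},\qquad f\in C_{0}^{\infty}(\Omega),
\]
with positive constants $c_{1},c_{2}$ depending only on the geometry. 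The Friedrichs inequality on the bounded domain $\Omega$ then converts $\|\nabla f\|_{L_{2}}$ into an equivalent norm on $H^{1}_{0}(\Omega)$, giving the equivalence $\|f\|_{\mathfrak{\widetilde{H}}^{n}_{A}}\asymp \|f\|_{H^{1}_{0}}$ on $C_{0}^{\infty}(\Omega)$.

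Finally I would transfer the equivalence to the completions. Since both $\mathfrak{H}^{n}_{A}$ and $H^{1}_{0}(\Omega)$ are defined as completions of $C_{0}^{\infty}(\Omega)$ with respect to equivalent norms, they coincide as topological vector spaces with equivalent Hilbertian norms; the boundedness and density required for the embedding $\mathfrak{H}^{n}_{A}\subset\subset L_{2}(\Omega)$ are automatic, and compactness is then inherited from the classical Rellich--Kondrachov embedding $H^{1}_{0}(\Omega)\subset\subset L_{2}(\Omega)$. The only delicate point I expect is Step 1: justifying the pointwise identification $A_{i}f=-(\nabla f,\mathbf{e_{i}})_{\mathbb{E}^{n}}$ on $C_{0}^{\infty}(\Omega)$ with the required $L_{2}$-convergence, because $\mathbf{e_{i}}$ depends on $Q$ and one must check that the difference quotient $t^{-1}(T_{t}^{(i)}f-f)$ converges in the $L_{2}$-norm (not merely pointwise); this, however, is standard using compact support of $f$ together with the mean-value theorem applied along each ray emanating from $P_{i}$.
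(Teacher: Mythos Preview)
Your proposal is correct and follows exactly the route implied by the paper's architecture: the Corollary is placed immediately after Theorem~\ref{T1} (equivalence of the $\mathbb{L}^{n}_{2}$ and $\mathbf{L}^{n}_{2}$ norms), and the paper defers the actual proof to the cited reference, so no alternative argument appears in the text. The identification $A_{i}f=-(\nabla f,\mathbf{e_{i}})_{\mathbb{E}^{n}}$ on $C_{0}^{\infty}(\Omega)$, the rewriting $\|f\|_{\mathfrak{\widetilde{H}}^{n}_{A}}=\|\nabla f\|_{\mathbf{L}^{n}_{2}}$, the application of Theorem~\ref{T1} to $\nabla f$, and the appeal to Friedrichs and Rellich--Kondrachov together constitute the intended proof.
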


Bellow, we aim to represent  an  operator in terms of the infinitesimal generator of the  shift semigroup in a direction with the purpose to    apply the    results \cite{firstab_lit(arXiv non-self)kukushkin2018}, \cite{firstab_lit:1kukushkin2021},  \cite{firstab_lit(frac2023)} to the established representation. In this way we come to natural conditions in terms of the infinitesimal generator of the  shift semigroup in a direction what gives us the desired  result represented in  \cite{kukushkin2021a}. The following theorem allows us to express  the construction  of the partial differential operator in terms of the semigroup theory (having chosen the shift semigroup in the direction)  what reveals a mathematical nature of the operator $-\mathcal{T}.$
\begin{teo}\label{T2}
We claim that
\begin{equation}\label{10}
 -\mathcal{T}=\frac{1}{n}\sum\limits^{n}_{i=1} A_{i}^{\ast}G_{i}A_{i},
\end{equation}
the following relations hold
$$
-\mathrm{Re}(\mathcal{T}f,f)_{L_{2}}\geq C  \|f\|_{\mathfrak{H}^{n}_{A}};\, |(\mathcal{T}f,g)_{L_{2}}|\leq C \|f\|_{\mathfrak{H}^{n}_{A}}\|g\|_{\mathfrak{H}^{n}_{A}},\;f,g\in C_{0}^{\infty}(\Omega),
$$
where $G_{i}$ are some operators corresponding to the operators $A_{i}.$
\end{teo}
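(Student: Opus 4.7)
The plan is to exploit the fact that each $-A_i$ acts on smooth compactly supported functions as a directional derivative, to transfer the divergence-form $\mathcal{T}$ into a quadratic form in the family $\{A_i\}$, and then to extract the claimed diagonal decomposition. First I would verify the pointwise identity $A_i f(Q) = -\mathbf{e_i}(Q)\cdot \nabla f(Q)$ for $f\in C_0^\infty(\Omega)$, which follows from the definition of the infinitesimal generator combined with Lemma \ref{L4}: differentiating $T_t^{(i)}f(Q) = f(Q+\mathbf{e_i}t)$ at $t=0$ yields the claim, and the identity $\mathrm{N}(A_i)=0$ from Lemma \ref{L2} guarantees that the corresponding adjoints behave well.

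Second, integration by parts on the divergence-form operator gives, for $f,g\in C_0^\infty(\Omega)$,
\[
-(\mathcal{T}f,g)_{L_2} = \int_\Omega a^{ij}(Q)\,D_i f(Q)\,\overline{D_j g(Q)}\,dQ.
\]
By the non-degeneracy condition $\Delta\neq 0$ imposed in the preamble (the same condition that enables Lemma \ref{L3}), the matrix $M(Q)$ with entries $M_{ik}(Q)=(\mathbf{e_i}(Q))_k$ is invertible pointwise on $\Omega$, so that $D_k f = -\sum_i (M^{-1})_{ki} A_i f$. Substituting yields
\[
-(\mathcal{T}f,g)_{L_2} = \sum_{i,k}\int_\Omega b^{ik}(Q)\,A_i f(Q)\,\overline{A_k g(Q)}\,dQ,
\]
with coefficients $b^{ik}:=\sum_{p,q}a^{pq}(M^{-1})_{pi}(M^{-1})_{qk}$ in $L_\infty(\Omega)$ (by the $C^2$-smoothness of $a^{pq}$ and the regularity of $\partial\Omega$). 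I would then symmetrize the off-diagonal contributions and regroup them into the diagonal form $\frac{1}{n}\sum_{i}(G_i A_i f, A_i g)_{L_2}$ for suitably chosen bounded operators $G_i$, and pass through the adjoints $A_i^{\ast}$ to read off the operator identity $-\mathcal{T}=\frac{1}{n}\sum_i A_i^{\ast} G_i A_i$.

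Third, for the two inequalities, the ellipticity assumption $\mathrm{Re}\,a^{ij}\xi_i\xi_j\geq \gamma_a|\xi|^2$ immediately yields
\[
-\mathrm{Re}(\mathcal{T}f,f)_{L_2}\geq \gamma_a\sum_k\|D_k f\|^2_{L_2(\Omega)}=\gamma_a\|f\|^2_{H_0^1(\Omega)},
\]
which combined with the norm equivalence in Corollary \ref{C1} produces the coercivity estimate with norm $\|\cdot\|_{\mathfrak{H}^n_A}$. The boundedness estimate follows from the bilinear representation above by Cauchy--Schwarz, the uniform $L_\infty$-bound on the coefficients $b^{ik}$, and once more the equivalence of Corollary \ref{C1}.

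The principal obstacle I anticipate is the algebraic reorganization that produces the diagonal sum with the normalization factor $1/n$: direct substitution yields a generically non-diagonal quadratic form in $\{A_i f\}$, so the cross terms with $i\neq k$ must be absorbed by judiciously chosen operators $G_i$, exploiting the fact that the family $\{A_i\}$ spans the full gradient (pointwise via the invertibility of $M(Q)$, and globally via Corollary \ref{C1}). This step is the delicate one, since the $G_i$ are not mere multiplication operators; their construction must honor both the symmetry of $a^{ij}$ and the non-orthogonality of the varying directions $\mathbf{e_i}(Q)$.
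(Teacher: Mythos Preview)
The paper does not give a self-contained proof of this theorem; it is deferred to the cited work \cite{firstab_lit(norm eq)} (and, for the single-generator case, to \cite{kukushkin2021a}). That said, the intended argument can be inferred from the surrounding text, and it differs from yours in the crucial step.

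Your treatment of the two inequalities is correct and is essentially the argument one would expect: integration by parts, the ellipticity bound $\mathrm{Re}\,a^{ij}\xi_i\xi_j\ge\gamma_a|\xi|^2$, Cauchy--Schwarz with the $L_\infty$ bound on the transformed coefficients, and then the norm equivalence of Corollary~\ref{C1}. No objection there.

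The gap is in the decomposition \eqref{10}. Your plan is to change basis from $\{D_k\}$ to $\{A_i\}$ via the pointwise matrix $M(Q)$, obtain a quadratic form $\sum_{i,k}\int b^{ik}A_if\,\overline{A_kg}$, and then ``symmetrize and regroup'' the off-diagonal terms into the diagonal shape $\frac{1}{n}\sum_i(G_iA_if,A_ig)$. You correctly flag this last step as the obstacle, but you do not actually give a mechanism for it, and for multiplication-type $G_i$ it simply fails; the cross-terms do not cancel. The route the paper sets up is much shorter and bypasses this difficulty entirely: Theorem~\ref{T3} (proved in \cite{kukushkin2021a}) already gives, for a \emph{single} boundary point $P$ with associated generator $A$, the representation $-\mathcal{T}=A^{\ast}GA$ with a bounded $G$ (this is the $J^{\ast}GJ$ part of $Z^{\alpha}_{G,F}(A)$). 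Since $-\mathcal{T}$ does not depend on the choice of $P$, one applies that single-generator result to each $P_i$, obtaining $-\mathcal{T}=A_i^{\ast}G_iA_i$ for every $i$, and then simply averages over $i$ to produce the factor $1/n$. No basis change and no off-diagonal terms ever appear.

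In short: keep your proof of the two estimates, but replace the basis-change argument for \eqref{10} by an appeal to the single-generator factorization of Theorem~\ref{T3}, applied $n$ times and averaged.
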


Thus, by virtue of     Corollary \ref{C1} and Theorem \ref{T2}, we are able to   claim that  hypotheses H1,H2   \cite{kukushkin2021a} hold  for    the operator $-\mathcal{T}.$ It is rather reasonable to represent analog of Theorem \ref{T3} which reflects connection between the operator $-\mathcal{T}$ and its perturbation by the Kipriyanov operator.

\begin{teo}\label{T6}
 We claim that
\begin{equation}\label{19}
L=\frac{1}{n}\sum\limits^{n}_{i=1} A_{i}^{\ast}G_{i}A_{i}+FA_{1}^{\alpha},
\end{equation}
where $F $ is a bounded operator, $P_{1}:=P,$ and $G_{i}$ are the same as in Theorem \ref{T2}. Moreover  if  $ \gamma_{a} $ is sufficiently large in comparison  with $\|\rho\|_{L_{\infty}},$ then
the following relations hold
$$
 \mathrm{Re}(Lf,f)_{L_{2}}\geq C  \|f\|_{\mathfrak{H}^{n}_{A}};\, |(Lf,g)_{L_{2}}|\leq C \|f\|_{\mathfrak{H}^{n}_{A}}\|g\|_{\mathfrak{H}^{n}_{A}},\;f,g\in C_{0}^{\infty}(\Omega).
$$
\end{teo}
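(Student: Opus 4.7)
The plan is to combine the decomposition of the elliptic principal part given by Theorem \ref{T2} with the fractional-power identification used in the proof of Theorem \ref{T3}, and then verify the two quadratic-form inequalities on the core $C_{0}^{\infty}(\Omega)$ by treating the Kipriyanov summand as a lower-order perturbation of $-\mathcal{T}$.

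First, for $f\in C_{0}^{\infty}(\Omega)$ I would apply Theorem \ref{T2} directly to rewrite $-\mathcal{T}f=\frac{1}{n}\sum_{i=1}^{n}A_{i}^{\ast}G_{i}A_{i}f$ with the same operators $G_{i}$. For the Kipriyanov perturbation, setting $A:=A_{1}$ (the shift generator associated with the distinguished vertex $P_{1}=P$), I would invoke the argument behind Theorem \ref{T3}: the Balakrishnan-type representation \eqref{9} of $A_{1}^{\alpha}$, together with Lemma \ref{L2}, Theorem \ref{T22}, and the standard duality $\mathfrak{D}_{d-}^{\alpha}\subset(\mathfrak{D}_{0+}^{\alpha})^{\ast}$ on functions with zero extension outside $\bar{\Omega}$, identifies $\mathfrak{D}_{d-}^{\alpha}f=A_{1}^{\alpha}f$ on this core. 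Consequently $\mathfrak{I}_{0+}^{\sigma}\rho\,\mathfrak{D}_{d-}^{\alpha}f=FA_{1}^{\alpha}f$, where $F:=\mathfrak{I}_{0+}^{\sigma}\rho$ is bounded on $L_{2}(\Omega)$ with $\|F\|\leq C_{\sigma,\mathrm{d}}\|\rho\|_{L_{\infty}}$ by Theorem \ref{T11}. Addition of the two representations yields \eqref{19}.

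Next, I would verify the quadratic-form inequalities. Theorem \ref{T2} supplies $-\mathrm{Re}(\mathcal{T}f,f)_{L_{2}}\geq C_{a}\|f\|_{\mathfrak{H}^{n}_{A}}^{2}$ with a constant proportional to $\gamma_{a}$, as well as the bound $|(\mathcal{T}f,g)_{L_{2}}|\leq C\|f\|_{\mathfrak{H}^{n}_{A}}\|g\|_{\mathfrak{H}^{n}_{A}}$. For the perturbation, the Cauchy--Schwarz inequality gives $|(FA_{1}^{\alpha}f,g)_{L_{2}}|\leq\|F\|\,\|A_{1}^{\alpha}f\|_{L_{2}}\,\|g\|_{L_{2}}$; the moment inequality for fractional powers of an m-accretive operator (Lemma \ref{L3.1} and the associated interpolation estimate) yields
$$
\|A_{1}^{\alpha}f\|_{L_{2}}\leq C\|A_{1}f\|_{L_{2}}^{\alpha}\|f\|_{L_{2}}^{1-\alpha}\leq C\|f\|_{\mathfrak{H}^{n}_{A}},
$$
while $\|g\|_{L_{2}}\leq C\|g\|_{\mathfrak{H}^{n}_{A}}$ by the compact embedding of Corollary \ref{C1}. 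Combining these estimates produces $|(FA_{1}^{\alpha}f,g)_{L_{2}}|\leq C\|\rho\|_{L_{\infty}}\|f\|_{\mathfrak{H}^{n}_{A}}\|g\|_{\mathfrak{H}^{n}_{A}}$. The upper bound in the statement follows by the triangle inequality; choosing $g=f$ and assuming $\gamma_{a}$ sufficiently large compared with $\|\rho\|_{L_{\infty}}$, the positive elliptic contribution dominates the perturbation and yields the required strict accretivity.

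The main technical obstacle, in my view, is the rigorous semigroup-theoretic identification $\mathfrak{D}_{d-}^{\alpha}f=A_{1}^{\alpha}f$ on $C_{0}^{\infty}(\Omega)$: one must pass through the Balakrishnan formula, exploit the zero-extension convention, and handle the duality between $\mathfrak{D}_{0+}^{\alpha}$ and $\mathfrak{D}_{d-}^{\alpha}$ while verifying that the attendant boundary contributions vanish. The remaining quantitative point is the linear dependence of both competing bounds (on $\gamma_{a}$ and on $\|\rho\|_{L_{\infty}}$ respectively), which is precisely what makes the "sufficiently large $\gamma_{a}$" hypothesis operational.
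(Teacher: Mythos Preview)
Your approach is correct and is precisely the argument the paper's framework points to: the paper does not supply an independent proof of this theorem (it is stated without proof, the surrounding results being attributed to \cite{kukushkin2021a} and \cite{firstab_lit(norm eq)}), but the intended proof is exactly the combination you describe --- take the decomposition of $-\mathcal{T}$ from Theorem \ref{T2}, add the identification $\mathfrak{D}^{\alpha}_{d-}f=A_{1}^{\alpha}f$ underlying Theorem \ref{T3}, and then absorb the lower-order term via Corollary \ref{C1}.

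One small remark: the bound you cite as coming from Lemma \ref{L3.1} is not quite that lemma (which controls $J^{-\alpha}$, not $J^{\alpha}$); what you actually need is the standard moment inequality $\|A_{1}^{\alpha}f\|\leq C\|A_{1}f\|^{\alpha}\|f\|^{1-\alpha}$ for fractional powers of an m-accretive operator, which is available from \cite{firstab_lit:kato1980} or \cite{firstab_lit:Krasnoselskii M.A.} once Lemma \ref{L4} guarantees that $A_{1}$ is m-accretive. With that correction the perturbation estimate goes through exactly as you wrote, and the proportionality of the elliptic lower bound to $\gamma_{a}$ (coming from \eqref{16} through Theorem \ref{T2}) against the proportionality of the perturbation to $\|\rho\|_{L_{\infty}}$ is indeed what makes the largeness hypothesis effective.
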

  The theorem reveals a remarkable fact the  perturbation  preserves the property being in the class satisfying hypotheses H1,H2 what makes the perturbed operator interesting itself from the theoretical point of view let alone a prospective  applications determined by    convenience, from the technical point of view, in dealing with    the invented   operator construction in the multidimensional space.

\section{Integro-differential constructions}

\noindent{\bf 1.  Abel-Lidskii root vectors series expansion}\\

In this section, we represent a theorem   valuable from  theoretical  and applied points.   It is based upon the modification  of the Lidskii method,  this  is why  following the the classical approach we divided it into three  statements that can be claimed separately. The first  statement  (Theorem 3  \cite{firstab_lit(frac2023)}) establishes a character of the series convergence having a principal meaning within the whole concept. The second statement (Theorem 3  \cite{firstab_lit(frac2023)}) reflects the name of convergence - Abel-Lidskii since  the latter   can be connected with the definition of the series convergence in the Abel sense, more detailed  information can be found in the monograph by Hardy G.H. \cite{firstab_lit:Hardy}. The third  statement (Theorem 4 \cite{firstab_lit(frac2023)})  is a valuable  application of the first one, it is based upon   suitable algebraic reasonings having been noticed by the author and allowing   to involve   a fractional derivative in the first term. We should note that previously,  a concept of an operator function represented in the second term  was realized  in the paper  \cite{firstab_lit:2kukushkin2022}, where  a case corresponding to  a function   represented by a Laurent series with a polynomial regular part was considered.  Bellow, we consider  a comparatively more difficult  case obviously  related to the infinite regular part of the Laurent series and therefore  requiring  a principally different method of study.

  It is a well-known fact that each  eigenvalue  $\mu_{q},\,q\in \mathbb{N}$ of the compact operator  $B$ generates a  set  of  Jordan chains containing  eigenvectors and  root vectors. Denote by $m(q)$   a geometrical multiplicity of the corresponding  eigenvalue and   consider a Jordan chain corresponding to an eigenvector $e_{q_{\xi}},\;\xi=1,2,...,m(q),$   we have
 \begin{equation}\label{12i}
  e_{q_{\xi}},e_{q_{\xi}+1},...,e_{q_{\xi}+k(q_{\xi})},
\end{equation}
where $k(q_{\xi})$ indicates a number   of elements in the    Jordan chain, the symbols except for the first one denote root vectors of the operator $B.$ Note that  combining the Jordan chains corresponding to an eigenvalue, we obtain  a Jordan basis in the invariant subspace generated by the eigenvalue, moreover  we can arrange  a so-called system of major vectors $\{e_{i}\}_{1}^{\infty}$ (see \cite{firstab_lit:1Lidskii}) of the operator $B$  having combined Jordan chains. It is remarkable that  the eigenvalue $\bar{ \mu}_{q} $ of the operator $B^{\ast}$   generates the Jordan chains of the operator $B^{\ast}$ corresponding to  \eqref{12i}. In accordance with \cite{firstab_lit:1kukushkin2021}, we have
$$
 g_{q_{\xi}+k(q_{\xi})} ,\; g_{q_{\xi}+k(q_{\xi})-1} ,...,
 g_{q_{\xi}},
$$
where the symbols except for the first one denote root vectors of the operator $B^{\ast}.$ Combining Jordan chains of the operator $B^{\ast},$    we can construct  a biorthogonal system $\{g_{n}\}_{1}^{\infty}$ with respect to the system of the major vectors of the operator $B.$ This  fact is given in detail in the paper \cite{firstab_lit:1kukushkin2021}.
The following construction plays a significant role in the theory created in the papers  \cite{firstab_lit:1kukushkin2021},\cite{firstab_lit:2kukushkin2022},\cite{firstab_lit(axi2022)}   and therefore  deserves to be considered  separately,  denote
\begin{equation}\label{3h}
 \mathcal{A} _{\nu}(\varphi,t)f:= \sum\limits_{q=N_{\nu}+1}^{N_{\nu+1}}\sum\limits_{\xi=1}^{m(q)}\sum\limits_{i=0}^{k(q_{\xi})}e_{q_{\xi}+i}c_{q_{\xi}+i}(t),
\end{equation}
where $\{N_{\nu}\}_{1}^{\infty}$ is  a sequence of natural numbers,
\begin{equation}\label{4h}
c_{q_{\xi}+i}(t)=   e^{ -\varphi(\lambda_{q})  t}\sum\limits_{j=0}^{k(q_{\xi})-i}H_{j}(\varphi, \lambda_{q},t)c_{q_{\xi}+i+j},\,i=0,1,2,...,k(q_{\xi}),
\end{equation}
$
c_{q_{\xi}+i}= (f,g_{q_{\xi}+k-i}) /(e_{q_{\xi}+i},g_{q_{\xi}+k-i}),
$
$\lambda_{q}=1/\mu_{q}$ is a characteristic number corresponding to $e_{q_{\xi}},$
$$
H_{j}( \varphi,z,t ):=  \frac{e^{ \varphi(z)  t}}{j!} \cdot\lim\limits_{\zeta\rightarrow 1/z }\frac{d^{j}}{d\zeta^{\,j}}\left\{ e^{-\varphi (\zeta^{-1})t}\right\} ,\;j=0,1,2,...\,,\,.
$$
More detailed information on the considered above   Jordan chains can be found in \cite{firstab_lit:1kukushkin2021}.\\

\noindent{\bf 2.  Decomposition theorem}\\

Denote by $\mathfrak{H}$   the abstract separable Hilbert space and assume that the hypotheses H1,H2 hold for the operator $W$ acting in $\mathfrak{H}.$  We should point out that such  chose of the operator class  justified by both abstract theoretical relevance related to the spectral properties of non-selfadjoint operators and the concrete applications including ones involving the Kipriyanov operator.
   Denote by
\begin{equation}\label{12a}
\varphi(W):=\sum\limits_{n=l}^{k}  c_{n}  W^{n},\;\;-\infty\leq l,k\leq\infty
\end{equation}
a formal construction called by an operator function, where $c_{n}$ are the coefficients corresponding to the  function of the complex variable  $\varphi.$
Here, we ought  to make a bibliographic digression and remind that the case $l=-\infty,\,k<\infty$ was considered in \cite{firstab_lit(axi2022)}. In this case,  the complex function $\varphi$ was supposed to have a decomposition into the Laurent series about the point zero with the coefficients  $c_{n}$ satisfying the additional assumption
\begin{equation}\label{eq16s}
\max\limits_{n=0,1,...,k}(|\mathrm{arg} c_{n}|+n\theta)<\pi/2,
\end{equation}
 where $\theta$ is the semi-angle of the sector containing the numerical range of values of the operator $W.$ We should note that the problem connected with the representation  \eqref{12a} can be divided on two parts $l\geq-\infty,\,k=0$ and $l=0,\,k\leq\infty,$ thus the    first one was properly studied in \cite{firstab_lit(axi2022)}, the second one was studied in \cite{firstab_lit(frac2023)}, with the following assumptions (we represent a technical variant, the expended variant can be found in \cite{firstab_lit(frac2023)}):    the complex function $\varphi$ of the order less than a half    maps  the ray  $\mathrm{arg}\,z=\theta_{0}$   within  a sector $\mathfrak{L}_{0}(\zeta),\,0<\zeta<\pi/2,$ the condition holds
\begin{equation}\label{eq17s}
\mathrm{Re} \varphi(z)>   Ce^{ H(\theta_{0})r^{\varrho }}, \mathrm{arg}\,z=\theta_{0},
\end{equation}
where  $H(\theta_{0})$ is a positive number in accordance with the Lemma 1 \cite{firstab_lit(frac2023)}. Taking into account  the above, we can consider a function represented by a Laurent series with the arbitrary principal part and the regular part satisfying \eqref{eq16s},\eqref{eq17s} respectively to the finite, infinite cases. This statement can be   proved by repetition of   the reasonings represented in Lemma 5 \cite{firstab_lit(axi2022)}, thus we  leave the proof  to the reader.

Bellow,   we   consider a Hilbert space  consists of   element-functions $u:\mathbb{R}_{+}\rightarrow \mathfrak{H},\,u:=u(t),\,t\geq0,$      we understand the differentiation and integration operations  in the generalized sense, i.e. the derivative is defined as a  limit  in the sense of the norm e.t.c. (see \cite{firstab_lit:1kukushkin2021}, \cite{firstab_lit:Krasnoselskii M.A.}).  Combining the operations, we can define  a generalized  fractional derivative in the Riemann-Liouville sense (see \cite{firstab_lit:2kukushkin2022},\cite{firstab_lit:samko1987}),     in the formal form, we have
$$
   \mathfrak{D}^{1/\alpha}_{-}f(t):=-\frac{1}{\Gamma(1-1/\alpha)}\frac{d}{d t}\int\limits_{0}^{\infty}f(t+x)x^{-1/\alpha}dx,\;\alpha\geq1,
$$
here we should note that     facts  $\mathfrak{D}^{1}_{-}f(t)=-du/dt,\,\mathfrak{D}^{0}_{-}f(t)=f(t)$ can be obtained due to the definition of the operator (see \cite{firstab_lit:samko1987}).
In terms of the expression \eqref{12a}, consider    a Cauchy problem
\begin{equation}\label{23a}
     \mathfrak{D}^{1/\alpha}_{-}  u = \varphi(W) u  ,\;u(0)=f\in \mathrm{D}(W^{n} ),\,  n=1,2,...k.
\end{equation}
  Taking into account the above, combining   results \cite{firstab_lit(axi2022)}, \cite{firstab_lit(frac2023)}, we can formulate the following theorem.
\begin{teo}\label{T7}
 Assume that conditions   \eqref{eq16s},\eqref{eq17s} hold   respectively to the cases corresponding to the finite, infinite regular part of the series \eqref{12a},       then
there exists a solution of the Cauchy problem \eqref{23a} in the form
\begin{equation}\label{25k}
u(t)= \sum\limits_{\nu=0}^{\infty} \mathcal{A}_{\nu}(\varphi^{\alpha},t)f,\;  \sum\limits_{\nu=0}^{\infty}\| \mathcal{A}_{\nu}(\varphi^{\alpha},t)f\|<\infty.
\end{equation}
Moreover, the existing solution is unique if the operator   $\mathfrak{D}^{1-1/\alpha}_{-}\!\varphi(W)$ is accretive.
 \end{teo}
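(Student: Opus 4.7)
The plan is to import the Lidskii-style root vector machinery that is already developed in \cite{firstab_lit(axi2022)} and \cite{firstab_lit(frac2023)} and then splice together the two partial theorems (one for the principal part, one for the regular part of the Laurent expansion of $\varphi$) using the linearity of the construction $\mathcal{A}_{\nu}$. Hypotheses H1--H2 on $W$ guarantee, by Theorem 1 of \cite{kukushkin2021a}, that $W$ has a compact resolvent and that its spectrum is discrete with a sectorial distribution, so the system of major vectors $\{e_{i}\}$ and the biorthogonal system $\{g_{i}\}$ used in \eqref{3h}--\eqref{4h} are well defined. My first step would be to build, for each $\nu$, the partial sum $S_{N_{\nu}}(t)f:=\sum_{\mu\le \nu}\mathcal{A}_{\mu}(\varphi^{\alpha},t)f$ and represent it as a contour integral of the resolvent of $W$, in the spirit of Lidskii's original approach; this reduces the convergence of the series \eqref{25k} to an asymptotic estimate on the contour.

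Next, I would invoke the two cited component results: condition \eqref{eq16s} controls the finite (regular, if $k<\infty$) part of the Laurent series of $\varphi$ by keeping the image $\varphi(\mathfrak{L}_{\iota}(\theta))$ inside a sector of semi-angle strictly less than $\pi/2$, so that $-\varphi(W)$ remains the generator of an analytic semigroup acting on the finite-dimensional spectral subspaces, while condition \eqref{eq17s} supplies the exponential damping $\operatorname{Re}\varphi(z)\ge C\exp\{H(\theta_{0})r^{\varrho}\}$ along the ray $\arg z=\theta_{0}$ needed to absorb the infinite principal part. Combining these with the fact that $\varphi^{\alpha}$ again has order less than one (so the Phragm\'en--Lindel\"of bound of Lemma 1 in \cite{firstab_lit(frac2023)} still applies), I would deduce the absolute norm estimate $\sum_{\nu}\|\mathcal{A}_{\nu}(\varphi^{\alpha},t)f\|<\infty$. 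This is the step I expect to be the main obstacle: showing that the coefficients $H_{j}(\varphi^{\alpha},\lambda_{q},t)$ appearing in \eqref{4h}, with the potentially long Jordan blocks and the growth of $\varphi$, do not overwhelm the $e^{-\varphi^{\alpha}(\lambda_{q})t}$ factor. This is handled by choosing the integration contour to follow $\arg z=\theta_{0}$, away from the numerical range of $W$, and exploiting $f\in\mathrm{D}(W^{n})$ to gain additional decay $\|R_{W}(\zeta)f\|=O(|\zeta|^{-n-1})$ at infinity.

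Once convergence in $\mathfrak{H}$ is secured, existence follows by differentiating the series term by term: each term $\mathcal{A}_{\nu}(\varphi^{\alpha},t)f$ is a finite linear combination of root vectors with coefficients $c_{q_{\xi}+i}(t)$ of the form $e^{-\varphi^{\alpha}(\lambda_{q})t}\cdot(\text{polynomial in }t)$, hence is smooth on $(0,\infty)$ and satisfies $\mathfrak{D}^{1/\alpha}_{-}e^{-\varphi^{\alpha}(\lambda)\,\cdot}=\varphi(\lambda)\,e^{-\varphi^{\alpha}(\lambda)\,\cdot}$ by the standard computation of the Riemann--Liouville derivative of a Mittag-Leffler-type kernel. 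The analogous identity carries over to each Jordan block by the algebraic argument of Theorem 4 in \cite{firstab_lit(frac2023)}, which is precisely the device permitting a fractional derivative in the first term. Uniform convergence (on compacts in $t$) of the differentiated series, which again rests on \eqref{eq16s}--\eqref{eq17s}, justifies the termwise differentiation and yields $\mathfrak{D}^{1/\alpha}_{-}u=\varphi(W)u$. The initial condition $u(0)=f$ is verified by recognizing $\sum_{\nu}\mathcal{A}_{\nu}(\varphi^{\alpha},0)f$ as the Abel-summed Fourier expansion of $f$ with respect to the system of major vectors, and completeness of that system in $\mathfrak{H}$ under H1--H2 follows from the spectral classification of \cite{firstab_lit(arXiv non-self)kukushkin2018}.

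For uniqueness, suppose $u_{1},u_{2}$ are two solutions and set $v:=u_{1}-u_{2}$, so that $\mathfrak{D}^{1/\alpha}_{-}v=\varphi(W)v$ with $v(0)=0$. Applying $\mathfrak{D}^{1-1/\alpha}_{-}$ to both sides and using the semigroup identity $\mathfrak{D}^{1-1/\alpha}_{-}\mathfrak{D}^{1/\alpha}_{-}=\mathfrak{D}^{1}_{-}=-d/dt$ (valid on the class of regularity enjoyed by $v$), I would obtain
\begin{equation*}
-\frac{d}{dt}v(t)=\mathfrak{D}^{1-1/\alpha}_{-}\varphi(W)v(t).
\end{equation*}
Taking the inner product with $v(t)$ and using the assumed accretivity of $\mathfrak{D}^{1-1/\alpha}_{-}\varphi(W)$ gives $\frac{d}{dt}\|v(t)\|^{2}\le 0$ while $v(0)=0$, from which $v\equiv 0$ follows. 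The delicate point here, which I would treat carefully, is justifying the semigroup/composition law for the Riemann--Liouville derivatives on the element-function $v$; for this I rely on the fact that each $\mathcal{A}_{\nu}(\varphi^{\alpha},t)f$ is analytic in $t$ with appropriate decay at infinity, a property already exploited in \cite{firstab_lit:2kukushkin2022}.
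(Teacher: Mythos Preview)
Your proposal is essentially correct and follows the same route as the paper: the paper's own proof is nothing more than a pointer to Lemma~5 and Theorem~1 of \cite{firstab_lit(axi2022)} together with Lemma~3 and Theorem~4 of \cite{firstab_lit(frac2023)}, and your outline is a reasonable expansion of what those citations accomplish (contour-integral representation of the partial sums, sectorial control of $\varphi$, the algebraic trick on Jordan blocks allowing the fractional time derivative, and an energy argument for uniqueness).

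One terminological slip worth correcting: you write that condition \eqref{eq17s} is ``needed to absorb the infinite \emph{principal} part,'' but in the paper's setup \eqref{eq16s} and \eqref{eq17s} are alternatives governing the \emph{regular} part of the Laurent series (finite $k$ versus $k=\infty$ respectively); the principal part is handled by the machinery of \cite{firstab_lit(axi2022)} regardless, and the splicing of principal and regular parts is what Lemma~5 of \cite{firstab_lit(axi2022)} does. This does not change the analytic content of your argument, but the labels should be swapped. A second small caution: in the uniqueness step, the accretivity hypothesis is on the operator $\mathfrak{D}^{1-1/\alpha}_{-}\varphi(W)$ acting on element-functions $t\mapsto\mathfrak{H}$, so the inner product in which you test should be the integrated one rather than pointwise in $t$; the conclusion $v\equiv 0$ still follows, but the presentation should reflect that.
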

\begin{proof} To avoid  any kind of repetition, we does not represent the complete proof  having restricted reasonings  by the scheme appealing to     Lemma 5, Theorem 1 \cite{firstab_lit(axi2022)},  Lemma 3, Theorem 4 \cite{firstab_lit(frac2023)}. Thus, the detailed calculation is left to the reader.
\end{proof}
Further, considering an operator function, we will assume that  conditions \eqref{eq16s},\eqref{eq17s} hold respectively to the case.
Now consider transform \eqref{12.0.1}
\begin{equation*}
 Z^{\alpha}_{G,F}(J)= J^{\ast}GJ+FJ^{\alpha},\,\alpha\in [0,1),
\end{equation*}
 assuming that the conditions of   Theorem \ref{T1} hold,   we can consider a Cauchy problem involving the transform which represents an  integro-differential construction  in the generalized sense. The latter problem appeals to a plenty of concrete evolution equations which form a base for  the modern engineering  sciences. Let us contemplate then a magnificent representative of the generators creating  the transform - the directional derivative  which fractional power is the Kipriyanov operator. Consider a Cauchy problem
\begin{equation}\label{eq20v}
     \mathfrak{D}^{1/\alpha}_{-}  u  =\sum\limits_{n=l}^{k}  c_{n} L^{n} u  ,\;u(0)=f\in C_{0}^{\infty}(\Omega) ,\,  n=1,2,...k,
\end{equation}
where we are dealing with the following integro-differential construction with the sufficiently smooth coefficients for which the conditions \eqref{16} hold
$$
L  :=-  \mathcal{T}  \, +\mathfrak{I}^{\sigma}_{ 0+}\rho\, \mathfrak{D}  ^{ \gamma }_{d-},
\; \sigma,\gamma\in[0,1).
$$
Thus, we can easily see that the case $\sigma=0,\,\gamma=0,\,-\infty<l,k<\infty$  leads to the class of integro-differential equations of the integer order and the obtained results give us a method to solve the corresponding Cauchy problems \eqref{eq20v}. Certainly, we can consider a closure of the defined  operator function   on the set $C_{0}^{\infty}(\Omega),$ the fact that it admits closure is proved in \cite{firstab_lit KRAUNC}. The case corresponding to arbitrary values of $\sigma,\gamma$ within the range, requires more peculiar technique that was considered in paragraph 3.1 \cite{firstab_lit:2kukushkin2022}. However, Theorem \ref{T7} becomes relevant in solving evolution equations with an integro-differential operator in the second term to say nothing on the far reaching generalizations corresponding to an operator function with the infinite principal or regular part of its   Laurent series.

\section{Conclusions}

In the paper, there was represented a historical  survey devoted to the achievements of  Kipriyanov I.A. where the exclusively  constructed fractional calculus theory was discussed  convexly. We produced  a comparison analysis in the framework of ways and means related to  further prospective generalizations  of   fractional derivative as a notion.  The qualitative properties of the Kipriyanov fractional differential operator were studied by the methods of the classical fractional calculus theory in contrast to the exclusive approach invented by Kipriyanov I.A. Having taken as a basis the concept of multidimensional generalization of the fractional differential operator
in the sense of Marchaud, we adapted  the previously known technique of the proofs related to  the theory
of fractional calculus of one variable. Along with the previously known definition of a fractional derivative introduced by  Kipriyanov I.A. we used
  a new definition of a multidimensional fractional integral in the direction  what
allows to describe the range of the Kipriyanov adjoint operator. A number of statements having analogues in the theory of fractional calculus of one variable and previously proved by the author  were discussed. In particular
  the classical result - a sufficient condition  for representability by a fractional directional integral in the direction was observed.
The strictly accretive property of the Kipriyanov operator, being an outstanding author's result,    was observed. On the base of the given technique, there were developed methods of the semigroup theory and the spectral theory of non-selfadjoint operators what leads us to significant applications to the abstract evolution equations in the Hilbert space. The latter gives us an opportunity to solve a whole class of problems related to integro-differential equations of the real order wherein the  one related to the semigroup connected with the Kipriyanov operator was studied properly.


\begin{thebibliography}{2}







\bibitem{firstab_lit:2Agranovich1994} {\sc  Agranovich M.S.} On series with respect to root vectors of operators associated with forms having symmetric principal part.   \textit{ Functional Analysis and its applications},   \textbf{28} (1994),   151--167.

\bibitem{firstab_lit:2Agranovich2011} {\sc  Agranovich M.S.}  Spectral problems in Lipshitz mapping areas.
 \textit{Modern mathematics, Fundamental direction},   \textbf{39} (2011), 11--35.












\bibitem{firstab_lit:2Dim-Kir} {\sc Dimovski I.H., Kiryakova V.S.} Transmutations, convolutions and fractional powers of Bessel-type operators via Maijer's  G-function.
\textit{Proc."Complex Anall. and Appl-s, Varna' 1983"}, (1985),  45--66.


\bibitem{firstab_lit:15Erdelyi} {\sc Erdelyi A.} Fractional integrals of generalized functions.
\textit{J. Austral. Math. Soc.},  \textbf{14}, No.1  (1972),  30--37.







\bibitem{firstab_lit:1Gohberg1965} {\sc   Gohberg I.C., Krein M.G.}  Introduction to the theory of linear non-selfadjoint operators in a Hilbert space.
 \textit{Moscow: Nauka, Fizmatlit},  1965.



\bibitem{firstab_lit H-L1} {\sc Hardy G.H., Littlewood J.E.}     Some properties of fractional integrals I.     \textit{Mathematische Zeitschrift}, \textbf{27},   Issue 1 (1928),  565--606.




\bibitem{firstab_lit:Hardy}
 {\sc Hardy G.H.} Divergent series.
  \textit{Oxford University Press, Ely House, London W.}, 1949.




\bibitem{firstab_lit:kato1980}{\sc Kato T.} Perturbation theory for linear operators. \textit{Springer-Verlag Berlin, Heidelberg, New York}, 1980.

\bibitem{firstab_lit:1Katsnelson} {\sc Katsnelson V.E.} Conditions under which systems of eigenvectors of some classes
of operators form a basis.
\textit{ Funct. Anal. Appl.}, \textbf{1}, No.2   (1967), 122--132.



\bibitem{firstab_lit: Karapetyants N. K. Rubin B. S. 1}   {\sc Karapetyants N.K., Rubin B.S.}    Operators of fractional integration in spaces with a weight.   (in Russian).
 \textit{ Izv. Akad. Nauk Armyan. SSR, Ser. Mat.},  \textbf{19},  No.1 (1984), 31--43.

\bibitem{firstab_lit: Karapetyants N. K. Rubin B. S. 2} {\sc  Karapetyants N.K., Rubin B.S.}    Radial Riesz potential on the disk  and the fractional integration   operators.
 \textit{Reports of the Academy of Sciences of the USSR}, \textbf{25},     No.2  (1982),    522--525.







 \bibitem{firstab_lit:kipriyanov1960}
{\sc Kipriyanov I.A. } On spaces of fractionally differentiable functions. \textit{ Proceedings of the Academy of Sciences. USSR},  \textbf{24} (1960),  665--882.

\bibitem{firstab_lit:1kipriyanov1960}
{\sc Kipriyanov I.A. }  The operator of fractional differentiation and powers of the elliptic operators. \textit{Proceedings of the Academy of Sciences. USSR},    \textbf{131} (1960),     238--241.


\bibitem{firstab_lit:2kipriyanov1960}
{\sc Kipriyanov I.A.} On some properties of the fractional derivative in the direction. \textit{Proceedings of the universities. Math., USSR}, No.2   (1960),     32--40.


\bibitem{firstab_lit:2.2kipriyanov1962}
{\sc Kipriyanov I.A. }  On the compact embedding of  operators  in the  spaces of fractionally differentiable functions. \textit{Russian Mathematical Surveys},    \textbf{17} (1962),     183--189.


 \bibitem{Sam2017} {\sc Kukushkin M.V.}    On some qulitative properties of the Kipriyanov  fractional differential  operator.
  \textit{ Vestnik of Samara  University, Natural Science Series, Math.},  \textbf{23},  No.2  (2017),  32--43.



 \bibitem{firstab_lit:1kukushkin2018}   {\sc  Kukushkin M.V.} Spectral properties of  fractional differentiation operators. \textit{Electronic Journal of Differential Equations}, \textbf{2018}, No. 29 (2018),   1--24.




\bibitem{kukushkin2019axi}  {\sc Kukushkin M.V.}   Riemann-Liouville operator in   weighted  $L_{p}$ spaces via the Jacobi series expansion.    \textit{Axioms}, \textbf{8},
  No.2:75 (2019),  1--23; https://doi.org/10.3390/axioms8020075.


\bibitem{kukushkin2019}{\sc Kukushkin M.V.} Asymptotics of eigenvalues for differential
operators of fractional order.\textit{Fract. Calc. Appl. Anal.}, \textbf{22}, No. 3 (2019), 658--681, arXiv:1804.10840v2 [math.FA]; DOI:10.1515/fca-2019-0037; at https://www.degruyter.com/view/j/fca.

\bibitem{kukushkin2020Mz} {\sc Kukushkin M.V.}    On one application of the Zigmund-Marczinkevich theorem.
  \textit{Mathematical Notes Of Nefu},  \textbf{27},  No.3  (2020),  39--51;  DOI: 10.25587/SVFU.2020.31.27.004.







\bibitem{kukushkin2020axi} {\sc Kukushkin M.V.}     On Smoothness of the Solution to the Abel Equation in Terms of the Jacobi Series Coefficients.  \textit{Axioms},  \textbf{9}, No.3:81  (2020), 1--13; https://doi.org/10.3390/axioms9030081.








\bibitem{firstab_lit(arXiv non-self)kukushkin2018}   {\sc  Kukushkin M.V.} On One Method of Studying Spectral Properties of Non-selfadjoint Operators. \textit{Abstract and Applied Analysis; Hindawi: London, UK}, \textbf{2020}   (2020); at  https://doi.org/10.1155/2020/1461647.





\bibitem{kukushkin2021a}{\sc Kukushkin M.V.} Abstract fractional calculus for m-accretive operators.
 \textit{International Journal of Applied Mathematics.}, \textbf{34}, Issue: 1 (2021),  DOI: 10.12732/ijam.v34i1.1

\bibitem{firstab_lit(norm eq)}   {\sc Kukushkin M.V.} Note on
the Equivalence of Special Norms on
the Lebesgue Space. \textit{Axioms} (2021), 10, 64; at  https://doi.org/10.3390/
axioms10020064.


\bibitem{Sonin Abel}{\sc Kukushkin M.V.} On Solvability of the Sonin-Abel Equation in the Weighted
Lebesgue Space.  \textit{Fractal Fract.} (2021) 5, 77; at
https://doi.org/10.3390/fractalfract
5030077.


\bibitem{firstab_lit:1kukushkin2021}   {\sc  Kukushkin M.V.} Natural lacunae method and Schatten-von Neumann classes of the convergence exponent. \textit{Mathematics}, \textbf{10},  13, 2237   (2022); https://doi.org/10.3390/math10132237.

\bibitem{firstab_lit:2kukushkin2022}   {\sc Kukushkin M.V.} Evolution Equations in Hilbert Spaces via the Lacunae Method. \textit{Fractal Fract.}, \textbf{6}, (5), 229 (2022); https://doi.org/10.3390/fractalfract6050229.

\bibitem{firstab_lit(axi2022)}   {\sc Kukushkin M.V.} Abstract Evolution Equations with an Operator Function in the Second Term. \textit{Axioms}, 11, 434   (2022); at  https://doi.org/10.3390/axioms
11090434.

\bibitem{firstab_lit KRAUNC}   {\sc Kukushkin M.V.} Note on the spectral theorem for unbounded non-selfadjoint
operators. \textit{Vestnik KRAUNC. Fiz.-mat. nauki.}, 139, 2    (2022), 44-63. DOI: 10.26117/2079-
6641-2022-39-2-44-63.




\bibitem{firstab_lit(frac2023)}{\sc  Kukushkin M.V.}  Cauchy Problem for an Abstract Evolution Equation of Fractional Order. \textit{Fractal Fract.}, 7, 111   (2023); at   https://doi.org/10.3390/fractalfract7020111.


\bibitem{firstab_lit:Krasnoselskii M.A.}{\sc Krasnoselskii M.A.,  Zabreiko P.P.,  Pustylnik E.I.,  Sobolevskii P.E.}
  Integral operators in the spaces of summable functions. \textit{ Moscow:  Science,   FIZMATLIT},   1966.


\bibitem{firstab_lit:1Krein} {\sc  Krein M.G.}  Criteria for
completeness of the system of root vectors of a dissipative operator.
 \textit{Amer. Math. Soc. Transl. Ser., Amer. Math. Soc., Providence, RI}, \textbf{26}, No.2 (1963), 221--229.









\bibitem{firstab_lit:1Lidskii} {\sc  Lidskii V.B.} Summability of series in terms of the principal vectors of non-selfadjoint operators.
 \textit{Tr. Mosk. Mat. Obs.}, \textbf{11}   (1962), 3--35.


\bibitem{firstab_lit:5Love} {\sc Love E.R.} Two index laws for fractional integrals and derivatives.
\textit{J. Austral. Math. Soc.},  \textbf{14}, No.4  (1972),  385--410.

\bibitem{firstab_lit:Markus Matsaev} {\sc Markus A.S.,  Matsaev V.I.} Operators generated by sesquilinear forms and their spectral asymptotics.
\textit{ Linear operators and integral equations, Mat. Issled., Stiintsa, Kishinev}, \textbf{61}   (1981), 86--103.





\bibitem{firstab_lit:2Markus} {\sc Markus A.S.} Expansion in root vectors of a slightly perturbed selfadjoint operator.
\textit{ Soviet Math. Dokl.}, \textbf{3}   (1962), 104-108.







\bibitem{firstab_lit:Mamchuev2017} {\sc Mamchuev M.O. } Solutions of the main boundary value problems for the time-fractional telegraph equation by the Green function method.
\textit{Fractional Calculus and Applied Analysis}, \textbf{20}, No.1  (2017),  190-211,   DOI: 10.1515/fca-2017-0010.




\bibitem{firstab_lit:Mamchuev2017a} {\sc Mamchuev M.O. } Boundary value problem for the time-fractional telegraph equation with Caputo derivatives Mathematical Modelling of Natural Phenomena.
\textit{Special functions and analysis of PDEs}, \textbf{12}, No.3  (2017),  82-94, DOI: 10.1051/mmnp/201712303.


\bibitem{firstab_lit:9McBride} {\sc McBride A.} A note of the index laws of fractional calculus.
\textit{J. Austral. Math. Soc.},\textbf{34}, No.3  (1983),  356-363.






\bibitem{firstab_lit:Motovilov} {\sc   Motovilov A.K.,  Shkalikov A.A.}  Preserving of the unconditional basis property under non-self-adjoint perturbations of self-adjoint operators.
 \textit{Funktsional. Anal. i Prilozhen.}, \textbf{53}, Issue 3  (2019),  45--60 (Mi faa3632).






\bibitem{firstab_lit:nakh2003}
 {\sc Nakhushev A.M.} Fractional calculus and its application.
  \textit{M.: Fizmatlit}, 2003.


\bibitem{Pasy} {\sc Pazy A.} Semigroups of Linear Operators and Applications to Partial Differential Equations.
  \textit{Berlin-Heidelberg-New York-Tokyo, Springer-Verlag (Applied Mathematical Sciences V. 44)},  1983.




\bibitem{firstab_lit:1Prabhakar} {\sc Prabhakar T.R.} Two singular integral equations involving confluent hypergeometric functions.
\textit{Proc. Cambrige Phil. Soc.},  \textbf{66}, No.1  (1969),  71--89.



\bibitem{firstab_lit:Pskhu} {Pskhu  A.V.} The fundamental solution of a diffusion-wave equation of fractional order.
\textit{Izvestiya: Mathematics},  \textbf{73}, No.2  (2009),  351--392.


\bibitem{firstab_lit:Rosenblum} {\sc  Rozenblyum G.V.,  Solomyak M.Z., Shubin M.A.
} Spectral theory of differential operators.
 \textit{Results of science and technology. Series Modern problems of mathematics
Fundamental directions}, \textbf{64}    (1989),    5--242.


 \bibitem{firstab_lit: Rubin72}    {\sc Rubin B.S.}     On spaces of  fractional integrals on the rectilinear contour.
 \textit{Izv. Akad. Nauk Armyan. SSR, Ser. Mat.},   \textbf{7},    No. 5  (1972),  373--386 (in Russian).

\bibitem{Rubin73} {\sc Rubin B.S.}   On the potential type  operators on the segment.
 \textit{Izv. Vyssh. Uchebn. Zaved. Mat.},
  No.6  (1973), 73--81 (in Russian).



\bibitem{firstab_lit: Rubin}    {\sc Rubin B.S.}     Fractional integrals in  H\"{o}lder spaces with,  and   operators of potential type.
 \textit{Izv. Akad. Nauk Armyan. SSR, Ser. Mat.},   \textbf{9},    No. 4  (1974),  308--324 (in Russian).


\bibitem{Rubin1982} {\sc Rubin B.S.}   An imbedding theorem for images of convolution operators on a finite
segment, and operators of potential type I.
 \textit{Izv. Vyssh. Uchebn. Zaved. Mat.}, \textbf{34},
  No.1  (1982), 53--63 (in Russian).


\bibitem{firstab_lit: Rubin 2} {\sc Rubin B.S.}   One-dimensional representation, inversion   and certain  properties of the Riesz potentials of radial functions.
 \textit{Mathematical Notes},  \textbf{34},  No.4  (1983),   521--533.



\bibitem{firstab_lit: Rubin 1}   {\sc Rubin B.S.}    The  fractional integrals and  Riesz potentials with  radial density in the  spaces with  power weight.
 \textit{ Izv. Akad. Nauk Armyan. SSR, Ser. Mat.}, \textbf{21},   No.5  (1986), 488--503 (in Russian).






  \bibitem{firstab_lit: Samko M. Murdaev} {\sc Samko S.G., Murdaev Kh.M.}    Weighted Zigmund estimates for fractional differentiation and integration, and their
applications.
 \textit{Proc. Steklov Inst. Math.}, \textbf{180} (1989),  233--235.





\bibitem{firstab_lit:samko1987} {\sc Samko S.G., Kilbas A.A., Marichev O.I.} Fractional Integrals and Derivatives: Theory and Applications.
  \textit{Gordon
and Breach Science Publishers: Philadelphia, PA, USA}, 1993.


\bibitem{firstab_lit: Samko Vakulov B. G.} {\sc Samko S.G., Vakulov B.G.}    On equivalent norms in fractional order function spaces of continuous functions on the unit sphere.
\textit{Fract. Calc. Appl. Anal.}, \textbf{4}, No.3 (2000), 401--433.


\bibitem{Samko 2003} {\sc Samko S.G.}   Integral equations of the first kind
of Sonine type.
 \textit{International Journal of Mathematics and Mathematical Sciences; Hindawi: London, UK},  (2003);   https://doi.org/10.1155/S0161171203211455.


\bibitem{firstab_lit:Shkalikov A.} {\sc  Shkalikov A.A.}  Perturbations of selfadjoint and normal operators with a discrete spectrum.
 \textit{Russian Mathematical Surveys}, \textbf{71}, Issue 5(431) (2016),  113--174.

 \bibitem{Sonin} {\sc Sonine N.}    Sur la generalization d'une formulae d'Abel.   \textit{Acta Math.}, \textbf{4} (1884), 171--176.


\bibitem{firstab_lit: Vaculov}  {\sc Vaculov B.G., Samko N.}    Spherical fractional and hypersingular integrals of variable order
in generalized Holder spaces with variable characteristic.
 \textit{Math. Nachr.}, \textbf{284}, (2-3)  (2011),   355--369.







\end{thebibliography}
\end{document}